\documentclass{amsart}
\usepackage{graphicx}
\usepackage{cite}
\usepackage[hidelinks]{hyperref}
\usepackage[nameinlink, capitalise]{cleveref} 
\crefname{equation}{}{}  
\crefname{figure}{Figure}{Figures}
\usepackage{subcaption}

\usepackage{tikz}
\usepackage{amsthm}
\usepackage{amssymb}
\usetikzlibrary{decorations.pathreplacing,angles,quotes}

\title{Sizes of witnesses in Covtree}

\author{Jette Gutzeit${}^{a}$}
\address{${}^{a}$Department of Mathematics, University of Zurich,
  Zurich, Switzerland}
\email{jette.gutzeit@icloud.com}

\author{Kimia Shaban${}^{b}$}
\address{${}^{b}$Department of Computer Science, University of Toronto,
  Toronto, ON, Canada}
\email{kimia.shaban@mail.utoronto.ca}

\author{Karen Yeats${}^{c}$}
\address{${}^{c}$Department of Combinatorics and Optimization, University of Waterloo,
  Waterloo, ON, Canada}
\email{kayeats@uwaterloo.ca}

\author{Stav Zalel${}^{d}$}
\address{${}^{d}$Department of Applied Mathematics and Theoretical Physics,
University of Cambridge, CB3 0WA, United Kingdom and
Homerton College, University of Cambridge, CB2 8PQ, United Kingdom}
\email{sz435@cam.ac.uk}

\thanks{KS and KY thank Jonathan Leake for his insights on and name for the exchange graph of downsets.  KY also thanks Zeus Dantas E Moura for catching an error in an earlier version.  KY is supported by an NSERC Discovery grant and by the Canada Research Chairs program.  KY thanks the Perimeter Institute for hosting the beginning of this work and thanks the hospitality of Homerton College, Cambridge. 
Research at Perimeter Institute is supported in part by the Government of Canada through the Department of Innovation, Science and Economic Development Canada and by the province of Ontario through the Ministry of Economic Development, Job Creation and Trade. This research was also supported in part by the Simons Foundation through the Simons Foundation Emmy Noether Fellows Program at Perimeter Institute.}

\tikzset{
    D/.style={draw, circle, inner sep=0pt,
    minimum size=4pt, fill},
    B/.style={draw=none, circle, inner sep=0pt,
    minimum size=4pt, fill=cyan},
    R/.style={draw=none, circle, inner sep=0pt,
    minimum size=4pt, fill=red}
}

\newtheorem{theorem}{Theorem}[section]
\theoremstyle{definition}
\newtheorem{definition}[theorem]{Definition}
\newtheorem{lemma}[theorem]{Lemma}
\newtheorem{proposition}[theorem]{Proposition}

\theoremstyle{remark}
\newtheorem{remark}[theorem]{Remark}

\begin{document}
\begin{abstract}
Given a set $\Gamma$ of $k$ unlabelled posets, each of size $n$, we say that a poset $Q$ is a \emph{witness} to $\Gamma$ if $\Gamma$ is the set of downsets of size $n$ of $Q$. We say that $Q$ is a \emph{minimal witness} if it does not contain a proper downset that is itself a witness to $\Gamma$. Motivated by the causal set approach to quantum gravity, we study the upper bound on the size of minimal witnesses as a function of $n$ and $k$. We show that there is no linear upper bound of the form $n+k+c$ for any constant $c$. We introduce the \emph{exchange graph of downsets} as a new tool to study this scenario, and use it to show that all minimal witnesses $Q$ satisfy the bound $|Q|\leq nk-n$, and that when $k=3$ there is at least one minimal witness $Q$ that satisfies the bound $|Q|\leq \frac{3}{2}(n+1)$. 
\end{abstract}
\maketitle

\section{Introduction}

Causal set theory is an approach to quantum gravity in which spacetime is fundamentally discrete and dynamical \cite{Bombelli:1987aa,Surya:2019ndm,Dowker2024}. This approach stands out for its simple and physically-motivated axioms and for its intrinsic Lorentzianness. It is also very appealing mathematically since it asks interesting questions about locally finite posets that one would not be led to ask otherwise.

In causal set theory, the spacetime manifold-metric pair of the continuum is replaced by a \emph{causal set}, a locally finite poset where the partial order encodes the lightcone structure \cite{Bombelli:1989mu}. In this scenario, the quantum dynamics of spacetime would take the form of a path integral---or rather a path sum---over weighted posets, and the aim of much research in this area is to find physically meaningful weights. One major approach has been to consider physically-motivated probability measures on classes of posets \cite{Rideout:1999ub,Ash:2002un,Ash:2005za,Dowker:2005gj,Brightwell2016,Brightwell:2011,Brightwell:2012,Brightwell:2002vw,Bento:2021voo}. These may be considered as classical dynamics whose quantisation is then sought \cite{Dowker:2010qh,Surya:2020cfm,Dowker:2022}.

Two physically-motivated principles that have been used to define probability measures are relevant for us in this work: growth and label-independence. Growth is the notion that a poset comes into being by the accretion of elements. In causal set theory, it is hoped that this notion of growth can encode the passage of time---a physical experience that is yet to be described in physics \cite{Dowker:2020qqs,Dowker:2014xga}. Growing posets have also been extensively studied in other contexts within the combinatorics literature, and two of the authors of this work have used this to connect causal set growth dynamics with the Hopf algebra literature \cite{Yeats_Zalel_2024}. Label-independence is the discrete analogue of general covariance---the fundamental assumption underlying General Relativity that physics is independent of coordinates. The idea is that spacetime is actually an \emph{unlabelled} poset. While we work with labelled posets for convenience, our physical conclusions should not depend on the labelling that we choose.  

Growth and label-independence are in some sense at odds with each other \cite{Brightwell:2002yu}. One way to phrase the condition of label-independence is that it is only the partial order, and no total order, that matters to physics. But the natural notion of growth necessarily introduces a total order as the elements are born into the poset one by one. In some ways this dissonance has been fruitful, giving rise to physically-motivated dynamics by using label-independence as a constraint \cite{Rideout:1999ub,Brightwell:2002vw,Dowker:2022}. But this dissonance leaves more to be desired: can one formulate a growth process for unlabelled posets, where labels and total orders never appear at all?

This is the motivation for \emph{covtree} (short for covariant tree), a tree whose nodes are certain sets of unlabelled posets \cite{Dowker:2019qiz,Zalel:2020oyf,Zalel2024}. Random walks up covtree yield probability measures on the space of unlabelled posets without labels ever entering the game. The unlabelled poset grows with each step up the tree. This growth does not correspond to any given element being born but to a more global notion of growth that is somewhat less intuitive to pin down \cite{Wuthrich:2015vva}.

Conceived within causal set theory, covtree is an interesting combinatorial object in its own right. In particular, the fact that it is simple to define but difficult to explicitly construct leaves much for exploration. Inspired by this, here we are interested in the following purely combinatorial question about posets.  Let $P_1, \ldots, P_k$ be distinct unlabelled posets, each with $n$ elements.  Suppose there is a poset $Q$ such that $\Gamma_n=\{P_1, \ldots, P_k\}$ is exactly the set of downsets of $Q$ of size $n$ up to isomorphism.  We call such a $Q$ a \emph{witness} for $\Gamma_n$.  If any such witness $Q$ exists then there will be infinitely many, since adding a new maximal element above all elements of $Q$ will create a larger poset with the same downsets of size $n$.  One can ask what the minimum size of $Q$ is for a given $\Gamma_n$, and perhaps most interestingly, how large can this minimum be for fixed $n$ and $k$?  Put slightly differently, given $n$ and $k$, can one give a tight bound as a function of $n$ and $k$, so that if $\Gamma_n$ has a witness then it has one of size at most this bound?

Here, we are not able to answer this question in full, but give some partial results and families of examples. We hope by this work to inspire others to consider this and other combinatorial puzzles posed by covtree.

\section{Technical background and preliminary observations}\label{sec background}

\subsection*{Basic terminology}
To briefly set our poset notation, a \emph{poset} $P$ is a set (by standard abuse of notation also called $P$), with a reflexive, antisymmetric, and
transitive relation which we write $\leq$.  For $a, b\in P$ we write $a<b$ if $a\leq b$ and $a\neq b$.  We say $b$ \emph{covers} $a$ if $a<b$ and there is no $c\in P$ such that $a<c<b$.  We say that $a$ and $b$ are \emph{comparable} if either $a\leq b$ or $b\leq a$, otherwise they are \emph{incomparable}. 
A \emph{downset} in a poset $P$ is a subset $D$ of $P$ that is downwards closed, that is for any $a\in D$ and $b\in P$, if $b\leq a$ then $b\in D$.
A \emph{chain} $C$ is a totally ordered subposet of $P$, that is every pair of elements of $C$ is comparable; the \emph{length} of a chain is the number of elements in the chain.  An \emph{antichain} is a subposet of $P$ where every pair of elements is incomparable. Given a poset $P$ and an element $x\in P$, the \emph{height} of $x$, $h(x)$, is the length of the longest chain in $P$ that has $x$ as its maximal element. The \emph{height} of $P$ is the length of the longest chain in $P$ or equivalently the height of the tallest element in $P$.

\subsection*{Sequential growth models} Of the growth models of causal set theory, the model known as transitive percolation is the simplest and most classical. At each step, a new element is added by choosing a subset of size $a$ of the $n$ existing elements with probability $p^aq^{n-a}$ for some $0<p<1$ and $q=1-p$, adding a new element above it and taking the transitive closure \cite{Rideout:1999ub}. Taking a slightly different perspective, this is the same as what are known as random graph orders \cite{AFrgo}.  Importantly, transitive percolation is a process that builds labelled posets, with each new element getting the next label, as are the Classical Sequential Growth models that generalize it \cite{Rideout:1999ub}.

To obtain a growth model that builds unlabelled posets, naively one could take the tree of possibilities of transitive percolation (whose nodes are finite labelled posets, as shown in figure \ref{labelled poscau}) and identify the nodes up to isomorphism. This gives the poset of unlabelled posets under containment as a downset, shown in figure \ref{poscau}.  However, this is very much no longer a tree and so is quite different to work with, especially in the context of extending the the transition probabilities of a random walk into a measure over the space of posets.
\begin{figure*}[h]
    \centering
    \begin{subfigure}[b]{0.5\textwidth}
        \centering
        \includegraphics[height=1.2in]{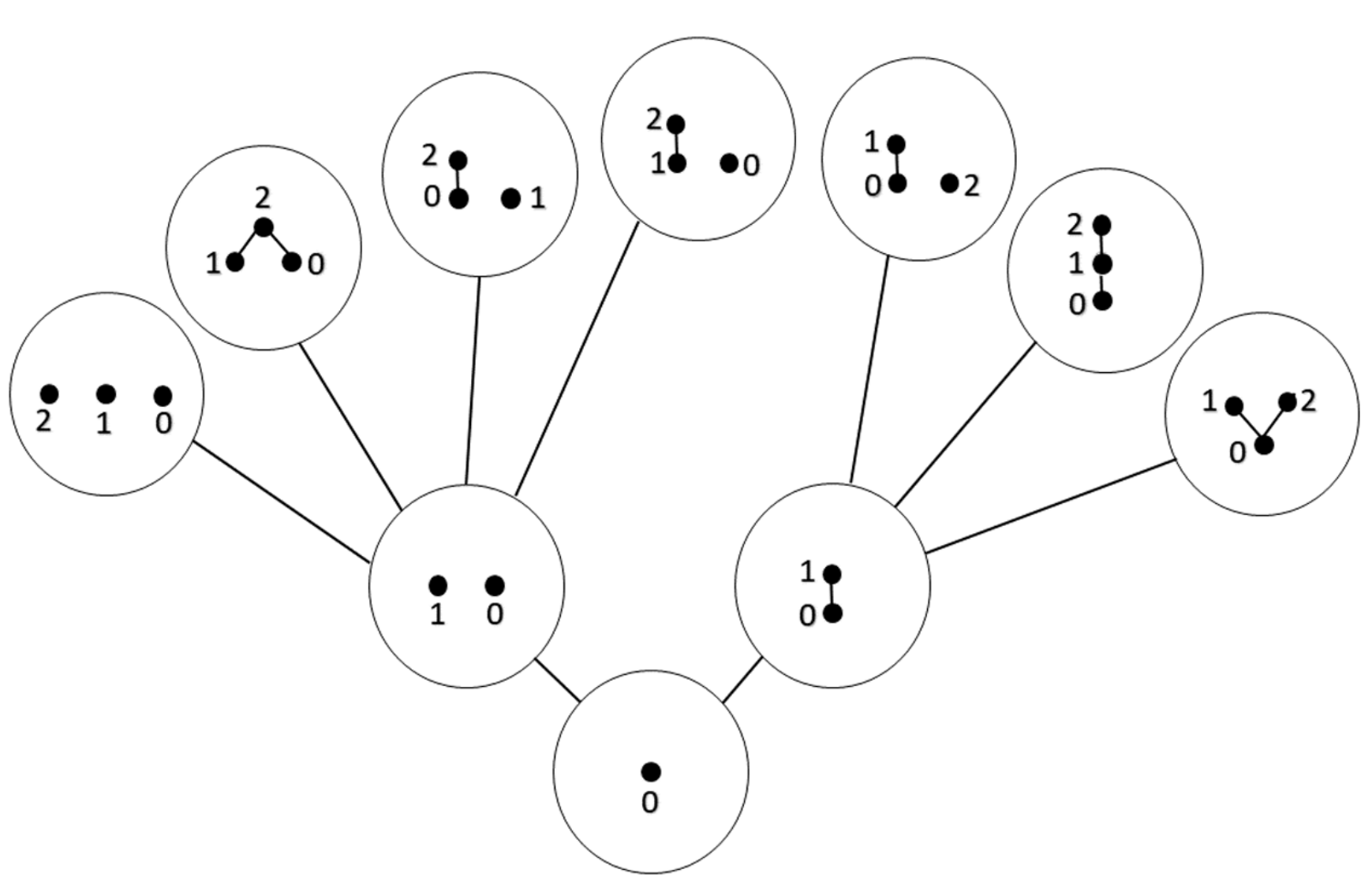}
        \caption{The tree of labelled posets.}\label{labelled poscau}
    \end{subfigure}%
    ~ 
    \begin{subfigure}[b]{0.5\textwidth}
        \centering
        \includegraphics[height=1.2in]{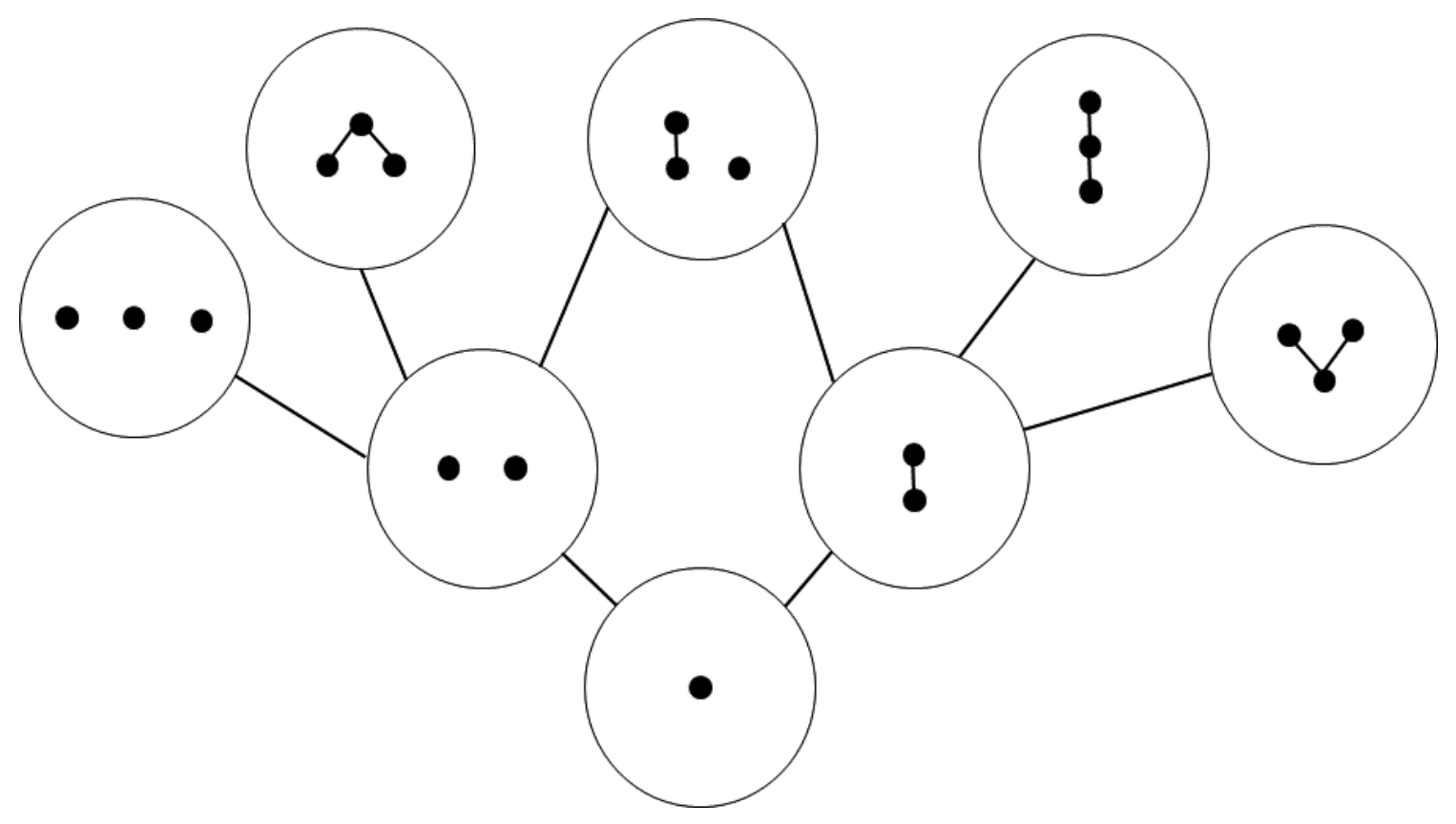}
        \caption{The poset of unlabelled posets.}\label{poscau}
    \end{subfigure}
    \caption{The tree of labelled possibilities of sequential growth and the result once the nodes are identified up to isomorphism.}
\end{figure*}
\subsection*{Covtree} One of us, with other coauthors, introduced covtree \cite{Dowker:2019qiz} in order to have a growth model that is covariant at every step while retaining a tree structure. The nodes at level $n$ of covtree are sets of the form $\Gamma_n=\{P_1, \ldots, P_k\}$, where each of the $P_i$ is an unlabelled poset of size $n$. A set $\Gamma_n$ is a node in covtree if and only if there exists some poset $Q$ of which $\Gamma_n$ is precisely the set of downsets of size $n$ up to isomorphism. Thus $Q$ witnesses that $\Gamma_n$ is a node of covtree and we say that $Q$ is a \emph{witness}\footnote{In \cite{Dowker:2019qiz}, $Q$ was called a \emph{certificate} of $\Gamma_n$.} of $\Gamma_n$. Physically, a node $\Gamma_n$ corresponds to the statement: \emph{the unlabelled downsets of the growing posets are $P_1, \ldots, P_k$.}

It is worth noting that not all sets $\Gamma_n$ are nodes of covtree, because some such sets have no witness.  For instance, every candidate witness of the set, $$\Gamma_3=\{\bullet \bullet \bullet\ ,  \includegraphics{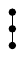}\}$$ necessarily contains $\bullet \includegraphics{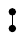}$ as a donwset of size 3, and is therefore not a witness of $\Gamma_3$.

The nodes are arranged in covtree by the following rule. A node $\Gamma_n$ is a parent of another node $\Gamma_m$ if and only if $\Gamma_n$ is the set of downsets of the $P_i\in\Gamma_m$ and $n=m-1$. In other words, the parent of $\Gamma_m$ is the set of posets that can be generated by picking an $P_i\in\Gamma_m$ and removing one of its maximal elements. For illustration, the first three levels of covtree are shown in figure \ref{covtree}.

\begin{figure}[h]
    \includegraphics[width=0.75\textwidth]{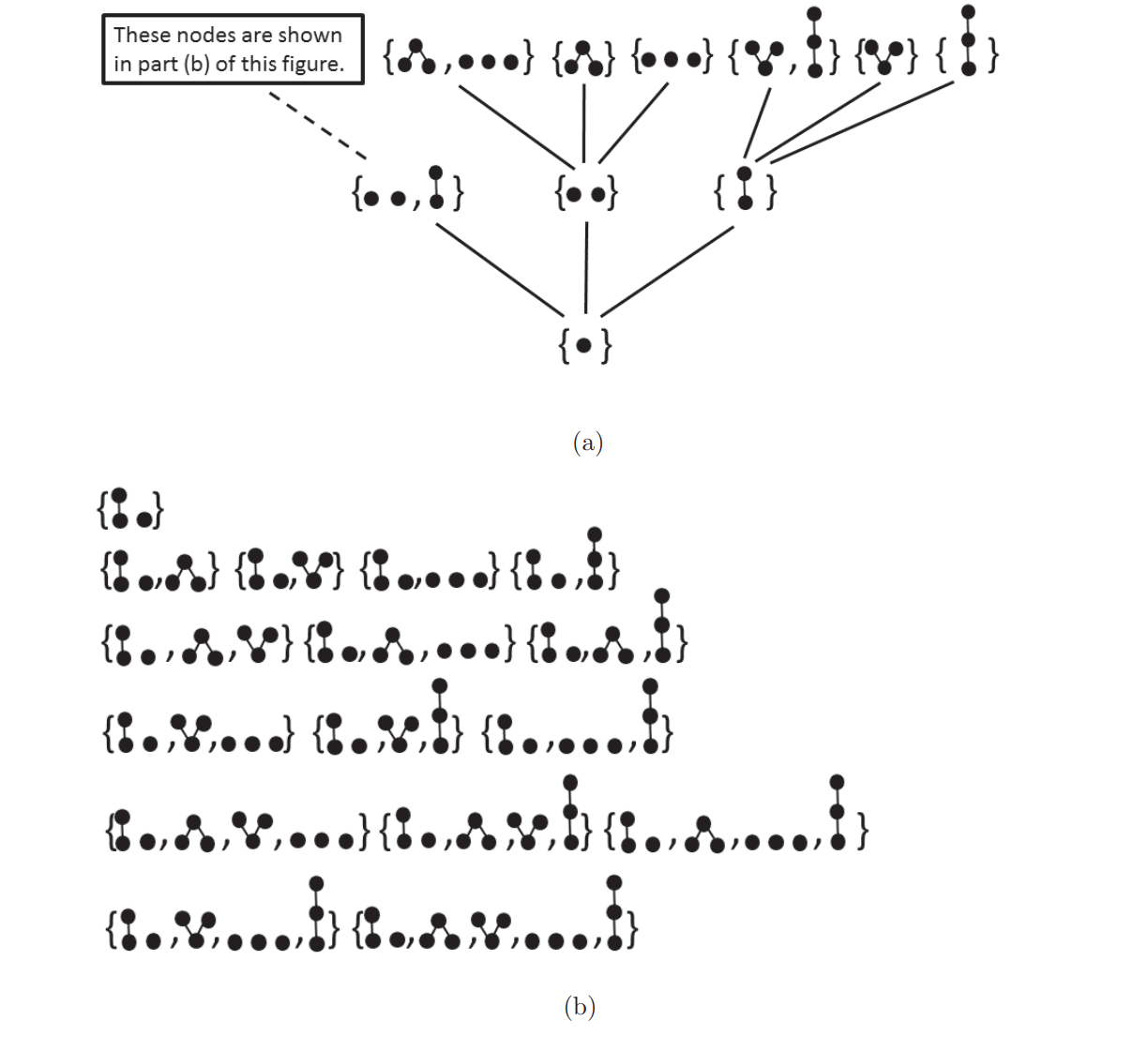}
    \caption{The first three levels of covtree. Taken from \cite{Dowker:2019qiz}.}\label{covtree}
\end{figure}

Given a node, it is very easy to construct its path down to the root of covtree by the above algorithm. The converse is far from true: in general, it is very difficult to generate the nodes above a given node. It is not easy to recognize when a set of posets is a node of covtree and there is no known growth rule to generate the children of a node. Doing so by brute force rapidly gets out of hand as the number of posets of cardinality $n$ increases rapidly with $n$. This is the motivation for our central question.

The notion of minimality of a witness will be particularly important for us. The set of witnesses to a given node of covtree forms a poset by containment as a downset. We say that $Q$ is a \emph{minimal witness} if it is minimal in this poset of witnesses. In other words, $Q$ is a minimal witness to $\Gamma_n$ if it does not contain a proper downset that is itself a witness to $\Gamma_n$. A node may have arbitrarily many minimal witnesses (cf. Properties 9 and 11 in \cite{Zalel:2020oyf}), and its minimal witnesses may be of different sizes. The interplay between the minimality of a witness and its size is at the heart of this work. Our central question can be phrased as: what are the smallest and biggest minimal witnesses that a node $\Gamma_n$ can have as a function of $n$ and $k=|\Gamma_n|$? In section \ref{section n+k+c}, we consider nodes that each have a unique minimal witness---in this special case, the minimal witness is also the smallest witness in size---and use them to show that the size of the smallest witness cannot be bounded from above by the form $n+k+c$ for any constant $c$. In section \ref{subsec exchange graph}, we give an upper bound on size that is satisfied by all minimal witnesses of a given node. And in section \ref{subsec k=3}, we give an upper bound for the size of the smallest minimal witness in the case where $\Gamma_n$ contains exactly 3 posets.

Two important results in the direction of our central question are known from previous work of one of us.  First, if a witness exists then some witness of size at most $nk$ exists.  This is Lemma 3.1 of \cite{Dowker:2019qiz}.  The proof is both simple and insightful. It will be helpful for us to reproduce it here. Suppose a witness $Q$ exists.  Within $Q$ take one isomorphic copy of each $P_i$.  The union of these within $Q$ has size at most $nk$ and is a downset of $Q$.  Furthermore, it has no downsets not among the $P_i$ for if it did then so would $Q$.  Therefore this union is itself a witness of size at most $nk$.  This bound, however, is far from tight and improving on it has been the main motivation behind this work.

The second result of importance to us is that an exact bound is known in the special cases when $k=1$ and $k=2$. In the first case, the bound above is trivially true and exact, since every witness of $\Gamma_n=\{P_1\}$ must be of size at least $n$ in order to contain $P_1$ as a downset. Indeed, in this case $P_1$ is itself the witness of $\Gamma_n$.

A more interesting scenario arises when $k=2$. If $\Gamma_n=\{P_1,P_2\}$ then every minimal witness of it must be of size $n+1$. This is Property 11 in \cite{Zalel:2020oyf}. The essence of this proof comes from the fact that when $k=2$, $\Gamma_n$ has a witness only if $P_2$ can be formed from $P_1$ by removing one maximal element and replacing it with a new maximal element.

One might hope to extend this argument to higher  values of $k$. The idea would be that one can form a path from all the $P_i\in\Gamma_n$ where each $P_i$ is constructed from the one that precedes it in the path by the replacement of a single maximal element. Without loss of generality, let this path be $P_1-P_2-\dots-P_k$. The naive expectation is then that if $\Gamma_n$ has a witness then it must have a witness of at most size $n+k-1$.  While the general idea is still helpful, this naive bound does not hold in general because spurious downsets can appear in the candidate witness $Q$ if some $P_i$ and another $P_j$ appear too close together in $Q$. In those cases, a witness may still exist but its size would be larger than $n+k-1$ in order for the downsets to be far enough from each other to not give rise to the spurious downsets. This corresponds to a path where at least one $P_i$ appears more than once.

A specific example where this happens is $\Gamma_5=\{
\includegraphics[width=10mm,scale=0.5]{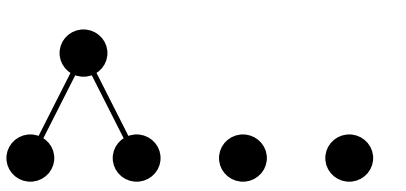},\includegraphics[width=8mm,scale=0.5]{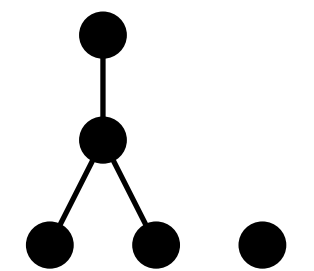},\includegraphics[width=8mm,scale=0.5]{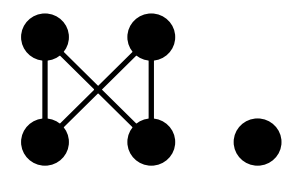}\}.$ One path that one could construct is $\includegraphics[width=10mm,scale=0.5]{C1}-\includegraphics[width=8mm,scale=0.5]{C2}-\includegraphics[width=8mm,scale=0.5]{C3}$. This path gives the false witness \includegraphics[width=10mm,scale=0.5]{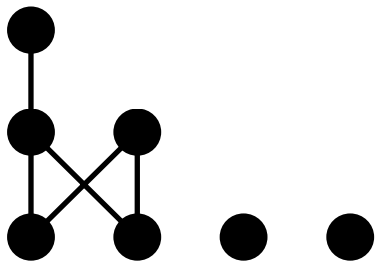} with the spurious downset \includegraphics[width=5mm,scale=0.5]{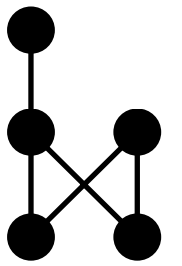}. One can repeat this exercise with the remaining permutations of the three posets to see that this $\Gamma_5$ has no witness of size $n+k-1$. However, we do have a bona fide witness $\includegraphics[width=12mm,scale=0.5]{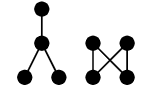}$ of cardinality $8=n+k$.  In \cref{subsec exchange graph} we will introduce a graph structure to keep track of the paths that come up in this context.  \Cref{fig exchange graph} works out this example in detail, and shows how we need a path of length four in this case.

We note in passing that this example also illustrates how some witnesses can be smaller than $n+k-1$: what was a false witness to $\Gamma_5$ is a true witness to $\Gamma_5'=\{
\includegraphics[width=10mm,scale=0.5]{C1},\includegraphics[width=8mm,scale=0.5]{C2},\includegraphics[width=8mm,scale=0.5]{C3},\includegraphics[width=5mm,scale=0.5]{C5}\}$. The intuition for why this witness has size $n+k-2<n+k-1$ is that the associated path contains only three of the four posets in $\Gamma_5$. 

In summary, the heuristic is that short paths correspond to small witnesses and long paths correspond to large witnesses.

\section{Minimal witnesses arbitrarily larger than $n+k+c$}\label{section n+k+c}

In this section we show that, for any constant $c$, there are nodes $\Gamma_n$ in covtree for which there are no witnesses smaller than $n+k+c$, where $k=|\Gamma_n|$.

We construct three examples of sequences of nodes and their minimal witnesses, where the difference between the size of the minimal witness and $n+k$ increases without bound. In each example, we begin by constructing what will ultimately be our minimal witness and then obtain the node $\Gamma_n$ that it witnesses.

The first example uses the so-called \emph{Newtonian} posets. These posets have a regular structure and are far from being generic, but provide a neat example to our claim. The second example expands the validity of the claim by allowing for arbitrary poset structures. What the two examples have in common is that the witnesses are \emph{tall} posets. The final example breaks the mould with witnesses of height 2.

For all our examples, we will need the following notation.
\begin{definition}
    For a poset $P$, we write $D(P)$ to denote the set of (non-empty) downsets of $P$ up to isomorphism. We write $D_n(P)\subseteq D(P)$ to denote those downsets of size $n$, and define $d_n(P):=|D_n(P)|$.
\end{definition}

\subsection*{Example 1: Newtonian posets}

We will need the following definition of level. For a positive integer $l$, \emph{level} $l$ in $P$ is the set of elements in $P$ of height $l$. For instance, level 1 is the set of minimal elements.\footnote{The posets we work with here are in particular graded posets so this use of the term level coincides with the usual notion of level of a graded poset up to the convention that for us minimal elements are of level $1$.}

\begin{definition}
    A \emph{Newtonian} poset \cite{Zalel:2020oyf} is one in which every element in level $i$ covers every element in level $i-1$. In other words, in a Newtonian poset, each level is complete to the level below it. 
\end{definition}

\begin{definition}
    \label{def_W} We define a poset of \emph{type \textbf{$\mathcal{W}$}} to be a disjoint union of two connected Newtonian posets that only differ in one maximal element.
\end{definition}

An example is shown in figure \ref{fig:w}.

\begin{remark}
    Note that in a Newtonian poset, all elements within a level have the same future and past (hence its name). Consequently, adding a new maximal element while retaining the Newtonian property can only be done in one of two ways: adding an element to the existing maximal level or creating a new level by putting the new element above all the existing elements. This imposes a minimum height requirement on type $\mathcal{W}$ posets: the Newtonian components must have height $\geq 2$ (for otherwise they will not be connected) and hence a type $\mathcal{W}$ poset must have height $\geq 3$.
\end{remark}
    \begin{figure}[h]
        \centering
        \begin{tikzpicture}
            \node[D](a1) at (0,0) {};
            \node[D, right of=a1](a2){};
            \node[D, right of=a2](a3){};
            \node[D,below of=a2](a4){};
            \node[D,below of=a4](a5){};
            \node[D,below left of=a5](a6){};
            \node[D,below right of=a5](a7){};
            \node[D, right of=a7](a8){};
            \node[D, left of=a6](a9){};
            \node[D, below of=a6](a10){};
            \node[D, below of=a7](a11){};
            \node[B,right of=a3](a12){};
            \draw(a12)--(a4);
            \draw(a1)--(a4)--(a5)--(a6)--(a10)--(a8)--(a11)--(a9)--(a10)--(a7)--(a11)--(a6);
            \draw(a7)--(a5);
            \draw(a2)--(a4)--(a3);
            \draw(a8)--(a5)--(a9);
            \node[D](b1) at (5,0) {};
            \node[D, right of=b1](b2){};
            \node[D, right of=b2](b3){};
            \node[D,below of=b2](b4){};
            \node[D,below of=b4](b5){};
            \node[D,below left of=b5](b6){};
            \node[D,below right of=b5](b7){};
            \node[D, right of=b7](b8){};
            \node[D, left of=b6](b9){};
            \node[D, below of=b6](b10){};
            \node[D, below of=b7](b11){};
            \node[B,above of=b2](b12){};
            \draw(b1)--(b12)--(b2)--(b12)--(b3);
            \draw(b1)--(b4)--(b5)--(b6)--(b10)--(b8)--(b11)--(b9)--(b10)--(b7)--(b11)--(b6);
            \draw(b7)--(b5);
            \draw(b2)--(b4)--(b3);
            \draw(b8)--(b5)--(b9);
        \end{tikzpicture}
        \caption{A poset of type $\mathcal{W}$ with 24 elements. The elements by which the components differ from each other are shown in blue.}\label{fig:w}
    \end{figure}

\begin{lemma}[Lemma 4.1 of \cite{Zalel:2020oyf}]
    A Newtonian poset has only one downset of each size up to isomorphism.
    \label{lemma:oneds}
\end{lemma}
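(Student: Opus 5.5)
The plan is to show that any downset of a Newtonian poset $P$ is determined up to isomorphism by its size alone. Write the levels of $P$ as $L_1, L_2, \ldots, L_h$ with $|L_i| = m_i$. By definition every element of $L_i$ covers every element of $L_{i-1}$. By transitivity, then, every element of $L_i$ lies above every element of $L_j$ for all $j<i$. Elements within a single level are pairwise incomparable, since comparable elements have different heights. So the isomorphism type of $P$, and of any subposet that meets the levels in a suitable pattern, is fixed by the sequence of level sizes.

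The key step is to describe the downsets. Let $D$ be a downset and let $i$ be the largest index with $D\cap L_i\neq\emptyset$. Pick $x\in D\cap L_i$. Every element of $L_1\cup\cdots\cup L_{i-1}$ lies below $x$, so all of these levels are contained in $D$. It follows that
\[
D = L_1\cup\cdots\cup L_{i-1}\cup S,
\]
where $S$ is a nonempty subset of $L_i$. Conversely, any set of this form is a downset, because everything below an element of $S$ lies in the lower levels.

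Next we compute the isomorphism type of $D$. With the order induced from $P$, $D$ is the Newtonian poset with level sizes $m_1,\ldots,m_{i-1},|S|$. Which elements of $L_i$ make up $S$ is irrelevant, since all elements of a level have identical pasts and futures within $D$. So $D$ is determined up to isomorphism by the pair $(i,|S|)$.

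Finally, the size is $|D| = m_1+\cdots+m_{i-1}+|S|$ with $1\le |S|\le m_i$. The intervals $\bigl(m_1+\cdots+m_{i-1},\, m_1+\cdots+m_i\bigr]$ partition the positive integers up to $|P|$, so $|D|$ determines $i$ uniquely, and then determines $|S|$. Hence two downsets of the same size have the same pair $(i,|S|)$ and are isomorphic. This gives $d_n(P)\le 1$ for every $n$, and in fact $d_n(P)=1$ for $1\le n\le |P|$.

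The only slightly delicate point is the claim that the induced order on $D$ really is the Newtonian structure with those level sizes. In particular, heights computed within $D$ must agree with heights in $P$. This holds because $D$ contains all full lower levels, so a longest chain below any element of $D$ already lies in $D$. No step should be a real obstacle.
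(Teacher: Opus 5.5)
Your proof is correct. Every downset has the form $L_1\cup\cdots\cup L_{i-1}\cup S$ with $\emptyset\neq S\subseteq L_i$, and the induced order is simply $x<y \iff h(x)<h(y)$, so only the pair $(i,|S|)$ matters; this pair is in turn determined by $|D|$. The transitivity step tacitly uses that every level $1,\dots,h$ is nonempty, which holds because a longest chain ending at an element of height $h$ meets every lower level.

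The paper does not prove this lemma itself; it quotes it from \cite{Zalel:2020oyf}. Your argument is the standard one and rests on the fact the paper states in its remark, that all elements of a level in a Newtonian poset share the same past and future.
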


\begin{proposition}
    Let $P$ denote a poset of type $\mathcal{W}$ of size $2n-2$ with at most
$n-1$ minimal elements. 
Then $d_n(P)=1+\lfloor\frac{n}{2}\rfloor$. 
\end{proposition}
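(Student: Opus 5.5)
Write $P=A\sqcup B$, where $A$ and $B$ are the two connected Newtonian components. Each has $n-1$ elements, and they differ in exactly one maximal element. The plan is to list every downset of size $n$ by splitting it into its parts in $A$ and in $B$, to use Lemma \ref{lemma:oneds} to reduce each part to a single isomorphism class, and then to count the distinct unordered pairs of classes, checking that different pairs really give non-isomorphic posets.

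\emph{Step 1: decomposition.} Every downset $D$ of $P$ is $D_A\sqcup D_B$ with $D_A$ a downset of $A$ and $D_B$ a downset of $B$. If $|D|=n$, write $a=|D_A|$. Since $|A|=|B|=n-1$, we have $1\le a\le n-1$, so neither part is empty.

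\emph{Step 2: shared small downsets.} Let $C$ be $A$ with its distinguished maximal element removed, so $C$ is also $B$ with its distinguished maximal element removed, and $|C|=n-2$. $C$ is a downset of a Newtonian poset, hence itself Newtonian.
\begin{itemize}
\item For $a\le n-2$, Lemma \ref{lemma:oneds} says $A$ has a unique downset of size $a$ up to isomorphism. The size-$a$ downset of $C$ is a downset of $A$, so it is that one; call it $C_a$. By the same argument, $C_a$ is also the unique size-$a$ downset of $B$.
\item The only downsets of size $n-1$ are $A$ and $B$ themselves, and $A\not\cong B$ because they differ in a maximal element.
\end{itemize}
Hence
\[
D_n(P)=\{A\sqcup C_1,\ B\sqcup C_1\}\cup\{C_a\sqcup C_{n-a} : 2\le a\le n-2\}.
\]
The second set is indexed by unordered pairs $\{a,n-a\}$ with $2\le a\le \lfloor n/2\rfloor$, which gives $\lfloor n/2\rfloor-1$ candidates. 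If they are all pairwise non-isomorphic, the total is $2+\lfloor n/2\rfloor-1=1+\lfloor n/2\rfloor$.

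\emph{Step 3: distinctness (the main obstacle).} This is where the hypothesis on minimal elements is used.
\begin{itemize}
\item Both components have height $\ge 2$, so the differing element is not minimal. Thus $A$ and $B$ have the same number $m$ of minimal elements, and $2m\le n-1$, so $m<n/2$.
\item In a Newtonian poset, $C_a$ is connected exactly when $a>m$; otherwise $C_a$ is an antichain.
\item Take $a\le n/2$. Then $n-a\ge n/2>m$, so $C_{n-a}$ is connected, and its size $n-a$ is at least $a$.
\item So the largest connected component of $C_a\sqcup C_{n-a}$ has size exactly $n-a$, whether $C_a$ is connected or a union of $a$ points. Therefore $a$ is recovered from the isomorphism class, and different pairs give non-isomorphic downsets.
\item The two posets $A\sqcup C_1$ and $B\sqcup C_1$ have a largest component of size $n-1$, so they are distinct from all of the above. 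They are distinct from each other because $A\not\cong B$.
\end{itemize}

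Small cases such as $n\le 3$, where the middle range of $a$ is empty, are excluded automatically, since the height condition forces $n$ to be large.
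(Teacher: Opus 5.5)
Your proof is correct and follows the same route as the paper: split a size-$n$ downset between the two Newtonian components, reduce each part to one isomorphism class via Lemma~\ref{lemma:oneds}, and use the bound on minimal elements to make the larger part connected so that different splits are non-isomorphic. You also supply details the paper leaves implicit, namely the common subposet $C$, the explicit count, and the largest-component invariant. Two small imprecisions, both harmless in your argument: $C_1$ is a single point and so is connected even though $1\le m$, so ``connected exactly when $a>m$'' holds only for $a\geq 2$; and $A\not\cong B$ is best justified by noting that the two ways of adding a maximal element to $C$ give posets of different heights.
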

\begin{proof}

    Let us denote the two disjoint Newtonian parts of the poset by $w_1$ and $w_2$. Downsets of size $n$ are obtained by taking a downset of size $n-1-l$ from $w_1$ and a downset of size $l+1$ from $w_2$, for $0\leq l\leq n-2$. Since there are at most $n-1$ minimal elements, for every value of $l$ at least one of the downsets taken from $w_1$ and $w_2$ is connected. This gives $d_n(P)=1+\lfloor\frac{n}{2}\rfloor$. 
\end{proof}

\begin{proposition}
    Let $P$ denote a poset of type $\mathcal{W}$ of size $2n-2$. Then $P$ is the unique minimal witness for $D_n(P)$.
\end{proposition}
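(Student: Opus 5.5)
The plan is to show that every witness $Q$ of $\Gamma:=D_n(P)$ contains a copy of $P$ as a downset. The statement then follows at once. $P$ is trivially a witness of $D_n(P)$. If $Q$ is a minimal witness, the copy of $P$ inside $Q$ is a downset that is itself a witness, so minimality forces $Q\cong P$. Applied to $Q=P$, the same fact shows that no proper downset of $P$ (size $<2n-2$) can be a witness, so $P$ is minimal and it is the unique minimal witness.

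Write $P=w_1\sqcup w_2$ with $|w_1|=|w_2|=n-1$, both connected Newtonian, and differing in one maximal element. By the Remark, one of them has its extra element in the existing top level and the other has it as a new level, so $w_1\not\cong w_2$ (they have different heights). From the proof of the previous Proposition, every member of $\Gamma$ is a disjoint union of a nonempty downset of $w_1$ and a nonempty downset of $w_2$. Hence two facts hold.

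\textbf{(a)} Every member of $\Gamma$ is disconnected and each of its components is Newtonian.

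\textbf{(b)} $\Gamma$ contains $w_1\sqcup\bullet$ and $\bullet\sqcup w_2$ (the cases $l=0$ and $l=n-2$).

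Now let $Q$ be a witness. By (b), $Q$ has downsets isomorphic to $w_1\sqcup\bullet$ and $\bullet\sqcup w_2$. The connected component of size $n-1$ of a downset is again a downset of $Q$, so we get downsets $A\cong w_1$ and $B\cong w_2$ of $Q$. Note $B\not\subseteq A$, since $|A|=|B|$ and $A\not\cong B$.

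The key observation is the following. If $A\subsetneq E$ for a downset $E$, and $x$ is a minimal element of $E\setminus A$, then $A\cup\{x\}$ is a size-$n$ downset of $Q$ and hence lies in $\Gamma$. By (a) it must be disconnected, so $x$ is incomparable to all of $A$, which means $x$ is minimal in $Q$. The same holds with the roles of $A$ and $B$ swapped.

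Next I show $A\cap B=\emptyset$ and that no element of $A$ is comparable to an element of $B$. Suppose $A\cap B\neq\emptyset$. By the key observation, every minimal element of $B\setminus A$ is minimal in $Q$, so it is a level-1 element of the Newtonian poset $B$. Since $B$ is connected of height $\ge 2$, every level-2 element $y$ of $B$ lies above all of level 1 of $B$. That includes some $x\in B\setminus A$, so $y\notin A$ (as $A$ is a downset). It also includes any level-1 element of $A\cap B$; such an element exists because $A\cap B$ is a nonempty downset of $B$. Now consider the downset $A\cup\{x\}\cup\{y\}$, possibly together with further level-1 elements of $B$. I would choose a size-$n$ downset consisting of $A$ plus one element $z$ that is minimal in $B\setminus A$ but lies above something in $A$, and this contradicts (a).

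This is the step I expect to be the main obstacle: one must rule out the situation where every element of $B\setminus A$ that is minimal in $B\setminus A$ is a global minimum. The way in is to use that, by Newtonianity, levels of $B$ above level 1 sit over all of level 1, which meets $A$. Failing a clean argument, I would compare heights and level sizes of $A\cap B$ inside $w_1$ and $w_2$ directly. Once $A\cap B=\emptyset$, the same argument applied to $E=A\cup B$ shows that no element of $B$ is related to $A$, and symmetrically. Hence $A\cup B$ is a downset of $Q$ isomorphic to $w_1\sqcup w_2=P$, which completes the proof.
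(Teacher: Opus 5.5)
The gap is the decisive step $A\cap B=\emptyset$, and it cannot be closed for the statement as written, because that statement is false. The rest of your setup is sound. The reduction (every witness contains a downset isomorphic to $P$, which gives both minimality and uniqueness) is correct, as are facts (a) and (b), the extraction of downsets $A\cong w_1$ and $B\cong w_2$, and your key observation. Also, once $A\cap B=\emptyset$ nothing more is needed, since two disjoint downsets can have no comparabilities between them.

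The contradiction you propose cannot be produced. An element $z$ minimal in $B\setminus A$ that lies above something of $A$ is exactly what your key observation rules out. What the key observation actually gives is that $A\cap B$ is a nonempty proper subset of the minimal elements of both $A$ and $B$. Indeed, if $A\cap B$ contained all of level $1$ of $B$, the minimal elements of $B\setminus A$ would lie at level $\geq 2$. Disconnectedness of size-$n$ downsets alone cannot exclude this configuration, as the following example shows.
\begin{itemize}
\item Let $w_1$ be four minimal elements with two elements above all of them, and $w_2$ four minimal elements with a $2$-chain above them. Then $n=7$ and $D_7(P)=\{w_1\sqcup\bullet,\ \bullet\sqcup w_2,\ \Lambda_4\sqcup\bullet\,\bullet,\ \text{the $7$-antichain}\}$.
\item Gluing $w_1$ and $w_2$ along a single minimal element gives an $11$-element poset with exactly these size-$7$ downsets, so $P$ is not the unique minimal witness.
\item In this poset, $A\cap B$ is one shared minimal point and every minimal element of $B\setminus A$ is a global minimum. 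This is precisely the case you could not exclude.
\end{itemize}

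The missing ingredient is the hypothesis of the preceding proposition, that $P$ has at most $n-1$ minimal elements. That is also how this result is used in the theorem that follows. With it, your argument closes:
\begin{itemize}
\item Pick level-$2$ elements $u\in A$ and $v\in B$, and write $L_1(\cdot)$ for level $1$.
\item Then $D_0=L_1(A)\cup L_1(B)\cup\{u,v\}$ is a downset of $Q$. It is connected through $A\cap B\neq\emptyset$, and its size is $|L_1(A)|+|L_1(B)|-|A\cap B|+2\leq n$.
\item Every element of $(A\cup B)\setminus D_0$ lies above all of $L_1(A)$ or all of $L_1(B)$. Adding these elements one at a time along a linear extension therefore yields connected downsets of every size up to $|A\cup B|=2n-2-|A\cap B|\geq n$.
\item This gives a connected downset of size $n$, contradicting (a).
\end{itemize}
For comparison, the paper's proof passes the same point by asserting that a connected union of copies of $w_1$ and $w_2$ of size $\geq n$ has a connected downset of size $n$. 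That is the same crux, and it also silently needs the bound on minimal elements: the $11$-element example above is connected but has no connected $7$-element downset. Your formulation, once completed as above, makes explicit where that bound enters.
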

\begin{proof}
First we argue that $P$ is a minimal witness for $D_n(P)$. Note that any minimal witness for $D_n(P)$ has to contain both $w_1$ and $w_2$, since they each appear as downsets in elements of $D_n(P)$. Consider a strict subset $Q\subset P$. Then there is a maximal element $x\in P$ that is not an element in $Q$. If $x$ is maximal in the $w_1$ part of $P$ then $w_1$ is not a downset in $Q$, and similarly for $w_2$. Then no such subset $Q$ is a witness of $D_n(P)$. Therefore, $P$ is a minimal witness of $D_n(P)$.

Now, assume for contradiction that there exists a minimal witness $Q\not= P$ for $D_n(P)$. Then the copies of $w_1$ and $w_2$ in $Q$ must be connected, for if they were disjoint then $P\subset Q$ and hence $Q$ would not be a minimal witness of $D_n(P)$. Thus, since each of $w_1$ and $w_2$ is itself connected (by definition \ref{def_W} of the type $\mathcal{W}$ poset), $Q$ contains a connected component of size $\geq n$ and therefore at least one connected downset of size $n$. But no element of $D_n(P)$ is connected. Contradiction.
\end{proof}

\begin{theorem}\label{thm section 3}
   For any constant $c$, one can find a node $\Gamma_n$ in covtree that has no witness of size $\leq n+k+c$, where $\Gamma_n=k$. 
\end{theorem}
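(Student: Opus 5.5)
We take $\Gamma_n = D_n(P)$ for a type $\mathcal{W}$ poset $P$ of size $2n-2$ with at most $n-1$ minimal elements. The two preceding propositions then do all the structural work. The first gives $k = d_n(P) = 1+\lfloor n/2\rfloor$. The second says $P$ is the unique minimal witness of $\Gamma_n$.

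From uniqueness we get the key fact: every witness $Q$ of $\Gamma_n$ contains a minimal witness as a downset. This holds because the set of witnesses is ordered by containment as a downset, and a finite $Q$ has only finitely many downsets, so we can descend to a minimal one. That minimal witness must be $P$, hence $|Q|\geq |P| = 2n-2$. So the smallest witness has size exactly $2n-2$.

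It then suffices to compare sizes:
\[
2n-2 - (n+k) = 2n-2-n-1-\lfloor n/2\rfloor = n-3-\lfloor n/2\rfloor \geq \tfrac{n}{2}-3 .
\]
This tends to infinity with $n$. Given $c$, we choose $n$ with $n/2 - 3 > c$, that is, $n > 2c+6$. Then $2n-2 > n+k+c$, so $\Gamma_n$ has no witness of size $\leq n+k+c$.

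The only remaining obstacle is existence: for every sufficiently large $n$ we need a type $\mathcal{W}$ poset of size $2n-2$ with at most $n-1$ minimal elements. We build it from two connected Newtonian posets $w_1$ and $w_2$, each of size $n-1$. Both share a common lower part $N$ of size $n-2$ and height at least $2$. To get $w_1$ we add one maximal element to the top level of $N$; to get $w_2$ we add one new element above all of $N$.

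For $N$ we take a chain of length $n-2$, which needs $n\geq 4$. This makes $w_1$ and $w_2$ connected Newtonian posets differing in one maximal element, and each has a single minimal element. Their disjoint union therefore has size $2n-2$ and $2\leq n-1$ minimal elements, as required.

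We should also check that $\Gamma_n$ is a node, which is immediate since $P$ witnesses it. Finally, the proposition's count of downsets should hold for this chain-based choice. That count needs one of the two pieces taken from $w_1$ and $w_2$ to be connected, which is automatic because each $w_i$ has a unique minimal element, so every nonempty downset of it is connected.
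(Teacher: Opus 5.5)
Your proposal is correct and is essentially the paper's proof. You take $\Gamma_n=D_n(P)$ for a type $\mathcal{W}$ poset $P$ of size $2n-2$, use the two preceding propositions to get $k=1+\lfloor n/2\rfloor$ and that $P$ is the unique (hence smallest) minimal witness, and compare $2n-2$ with $n+k+c$; your explicit chain-based construction and descent-to-a-minimal-witness argument fill in details the paper leaves implicit, and your strict condition $n>2c+6$ is slightly more careful than the paper's $n-\lfloor n/2\rfloor\geq 3+c$, which at equality gives only $2n-2=n+k+c$.
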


\begin{proof}
  Fix some positive constant $i\geq 5$. Consider an infinite sequence of posets $P_i,P_{i+1},P_{i+2},\dots$ where each $P_n$ is a type $\mathcal{W}$ poset of size $2n-2$ with at most $n-1$ minimal elements. Each $P_n$ is the unique minimal witness of $\Gamma_n=D_n(P_n)$ with $k=d_n(P_n)=1+\lfloor\frac{n}{2}\rfloor$. Then for any constant $c$, we have that the unique minimal witness of $\Gamma_n$ has size larger than $n+k+c$, for all $n$ satisfying $n-\lfloor\frac{n}{2}\rfloor\geq 3+c$. Since no witness can be smaller in size than the unique minimal witness, this completes the proof.
\end{proof}

\subsection*{Example 2: Tall families with general seeds}

Now we generalise our construction to allow for generic poset substructures. Our new witnesses are tall posets built from two arbitrary seed posets $P_0$ and $R_0$. These posets are each put above a Newtonian poset of choice and the union is taken to form our new witness. We will need to count the number of downsets of this witness. For ease of counting, we will impose the restriction that our posets $P_0$ and $R_0$ do not contain each other as downsets. For clarity, we will further impose that $P_0$ and $R_0$ are of equal cardinality and that the Newtonian poset by which they are extended is a chain (also known as ladder). For completeness, a brief outline of the most general construction is given in remark \ref{remark:general construction}. 

We begin with some notation. 
\begin{definition}
    Let $P_0$ and $R_0$ denote two distinct posets of size $m$. For $i\geq 0$, let $P_i$ denote the poset that is obtained by putting $P_0$ above a chain (also known as ladder) $l_i$ of length $i$, and similarly for $R_i$ (where the chain of length $i=0$ is the empty set). Let $P_iR_i$ denote the poset obtained by taking the disjoint union of $P_i$ and $R_i$.
\end{definition}

The key to the construction is that for downsets of $P_iR_i$ that contain only the chain parts, we will generically find two copies of each downset, one taking the longer subchain from $P_i$ and one taking the longer subchain from $R_i$.  We define the following notation to keep track of these pairs.

\begin{definition}\label{def_pairs}
Define the set of all duplicates of size $s$ in $P_iR_i$ to be
    \begin{align*}
    I_{s,i}:=\bigg{\{}(p,r)\in D(P_i)\times D(R_i)\,:\,& |p|+|r|=s,|p|\geq\frac{s}{2}, p\not=r, \ 
     p,r\in D(P_i)\cap D(R_i)\bigg{\}}
    \end{align*}
\end{definition}

\begin{lemma}
    When $i>m$, the sequence of the numbers $d_{m+i+1}(P_iR_i)$ increases by 1 every other step and takes the values,    \begin{align}
        d_{m+i+1}(P_{i}R_{i})=\begin{cases}
        d_{m+i}(P_{i-1}R_{i-1})&\text{ if }m+i \text{ is even}\\
        d_{m+i}(P_{i-1}R_{i-1})+1&\text{ if }m+i \text{ is odd}.
    \end{cases}
    \end{align}
    \label{lemma:ladderextension}
\end{lemma}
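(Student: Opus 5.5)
We classify the downsets of $P_iR_i$ of size $s=m+i+1$ and compare them with those of $P_{i-1}R_{i-1}$ of size $m+i$. A downset of $P_iR_i$ is a pair $(p,r)$ with $p$ a downset of $P_i$ and $r$ a downset of $R_i$, and $|p|+|r|=s$. Every downset of $P_i$ is either a subchain $l_j$ with $j\le i$, or $l_i$ together with a nonempty downset of $P_0$ placed above it; the same holds for $R_i$. Since $P_0$ and $R_0$ are not downsets of each other, a downset of $P_i$ that meets $P_0$ cannot be isomorphic to one of $R_i$ unless the two agree on their $P_0$/$R_0$ parts, and that only happens for proper sub-downsets. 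Throughout, $i>m$ will force most configurations to be chain-only.

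\textbf{Step 1 (partition by type).} We split the unlabelled downsets of size $s$ into two classes. Class (a) contains the pairs in which at least one side meets the seed, i.e.\ one side has size $>i$. Class (b) contains the pairs in which both sides are pure chains $l_a\sqcup l_b$ with $a+b=s$ and $a,b\le i$; these are counted up to swapping, which is where the duplicates $I_{s,i}$ of Definition \ref{def_pairs} enter. For class (a) we use that one side has size $\ge i+1>m$, so the other side has size $\le s-i-1=m$, which is smaller than $i$, so the other side is a chain $l_j$. Hence class (a) downsets are $(l_i\cup x)\sqcup l_j$ or $l_j\sqcup(l_i\cup y)$, where $x$ ranges over downsets of $P_0$, $y$ over downsets of $R_0$, and $|x|+j=m+1$ with $|x|\ge1$, $j\ge 0$.

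\textbf{Step 2 (bijection for class (a)).} The map that shortens the long chain $l_i$ to $l_{i-1}$ sends class (a) of $P_iR_i$ at size $s$ bijectively onto class (a) of $P_{i-1}R_{i-1}$ at size $s-1$. The condition $|x|+j=m+1$ is unchanged, and the inequality $i-1>m-1$ still keeps the short side a chain. For this to be a genuine bijection on isomorphism classes we must check that $l_i\cup x\cong l_i\cup x'$ if and only if $x\cong x'$. This holds because the chain below is the common past of all of $x$. There is a degenerate worry at $i-1=m$, where the short side could have length $m$ and still be a chain; since $j\le m<i-1$ it remains a chain, so the worry does not arise.

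\textbf{Step 3 (class (b), the main counting).} Chain-only downsets correspond to unordered pairs $\{a,b\}$ with $a+b=s$ and $1\le a,b\le i$, together with the single-chain case $l_s$ if $s\le i$. Since $s=m+i+1>i$, no single chain of length $s$ fits, and both $a,b\ge s-i=m+1$. The number of such unordered pairs is the number of $a$ with $\lceil s/2\rceil\le a\le i$, which equals $i-\lceil s/2\rceil+1$. Going from $(i-1,s-1)$ to $(i,s)$, the upper limit rises by one, and $\lceil s/2\rceil$ rises by one exactly when $s-1$ is even, i.e.\ when $m+i$ is even. So the class (b) count is unchanged when $m+i$ is even and increases by $1$ when $m+i$ is odd, which is exactly the claimed recursion.

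I expect the main obstacle to be Step 1's claim that class (a) and class (b) never produce isomorphic posets. Here a downset $(l_i\cup x)\sqcup l_j$ has a component with a non-chain top, or, if $x$ is itself a chain, could look like a chain pair. To handle this, I would first try to argue via the component that contains elements above height $i$ together with the seed structure. If that fails, I would restrict to the case where $P_0$ and $R_0$ have no chain downsets beyond what is matched consistently at both levels $i-1$ and $i$. Any such coincidence occurs identically in both, so it cancels in the difference, and the recursion survives.
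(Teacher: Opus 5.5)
Your proof is correct. The step you single out as the main obstacle is not actually an obstacle, and your route is a reorganization of the paper's count rather than a copy of it.

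\textbf{Closing the flagged step.} For $i\ge 1$, every nonempty downset of $P_i$ or $R_i$ contains the bottom element of the chain, so it is connected. Hence a size-$s$ downset $p\sqcup r$ has exactly the two components $p$ and $r$, and the multiset $\{|p|,|r|\}$ is an isomorphism invariant. A class (a) poset has a component of size at least $i+1$ (equivalently, an element of height above $i$); a class (b) poset has none. So the two classes never share an isomorphism type. In particular, $l_{i+|x|}\sqcup l_j$ with $x$ a chain cannot equal any $l_a\sqcup l_b$ with $a,b\le i$. This is the same fact that justifies counting class (b) as unordered pairs and testing isomorphism in class (a) via the big component.

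You should therefore drop your fallback (restricting the chain downsets of $P_0,R_0$, or arguing that coincidences cancel in the difference). As written it would only give a weaker lemma. One minor slip: in the degenerate case $i-1=m$ you want $j\le m\le i-1$, not $j\le m<i-1$; the conclusion stands.

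\textbf{Comparison with the paper.} The paper uses the same split into seed-touching and chain-only downsets, but packages it as a closed-form count.
\begin{itemize}
\item It counts ordered pairs via $\sum_k d_k(P_i)d_{s-k}(R_i)=|D(P_0)|+|D(R_0)|+i-m$.
\item It subtracts the duplicates $|I_{s,i}|=|D(P_0)\cap D(R_0)|+\lfloor (i-m)/2\rfloor$.
\item This gives $d_{m+i+1}(P_iR_i)=C(P_0R_0)+\lceil (i-m)/2\rceil$, and the recursion is read off the ceiling.
\end{itemize}
You never evaluate class (a) (its size is $|D(P_0)\cup D(R_0)|=C(P_0R_0)$). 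You only show it is independent of the level via the chain-shortening bijection. You count class (b) directly as $i-\lceil s/2\rceil+1=\lceil (i-m)/2\rceil$ unordered pairs, so no duplicate bookkeeping is needed.

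Your route has a small advantage at the first step $i=m+1$, which compares with $P_mR_m$. You cover it directly, whereas the paper's closed form is only derived under $i>m$ (it does also hold at $i=m$). The paper's route, in turn, produces the explicit value of $k$ that the following proposition uses.
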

\begin{proof}
    Recall that by construction, $|P_i|=|R_i|=m+i$. Then to obtain a downset of size $m+i+1$ from $P_iR_i$, we must take a downset of size $1\leq k\leq m+i$ from $P_i$ and a downset of size $m+i+1-k$ from $R_i$. Therefore, the number of downsets of size $m+i+1$ of $P_iR_i$ is given by
    \begin{equation}\label{eq formula for dPQ}
        d_{m+i+1}(P_{i}R_{i})=\sum_{k=1}^{m+i}d_{k}(P_i)d_{m+i+1-k}(R_i)-|I_{m+i+1,i}|.
    \end{equation}  
    
    To evaluate the first term, note that the downsets of size $j$ in $P_i$ are given by the downsets of size $j-1$ in $P_{i-1}$ with the added minimal element, so $d_j(P_i)=d_{j-1}(P_{i-1})$. 
    Applying this relation recursively, we get,
    \begin{align}
        d_j(P_i)=\begin{cases}
        1&\text{ if }j\leq i\\
        d_{j-i}(P_0)&\text{ if }j > i
    \end{cases}
    \end{align}
    and likewise for $R_i$.
    
    We now assume that $i>m$. Using the above formula we can split the first term of \cref{eq formula for dPQ} into three parts and simplify:
    \begin{align}\label{eR_term_1}
        &\sum_{k=1}^{m+i}d_{k}(P_i)d_{m+i+1-k}(R_i)\nonumber\\ 
        =&\sum_{k=1}^{m}1 \cdot d_{m+1-k}(R_0)+\sum_{k=m+1}^{i}1\cdot 1+\sum_{k=i+1}^{m+i}d_{k-i}(P_0)\cdot 1\\
        =&|D(P_0)|+|D(R_0)|+i-m\nonumber.
    \end{align}
    
    Now we evaluate the second term of \eqref{eq formula for dPQ}, $|I_{m+i+1,i}|$, namely the number of duplicate pairs. 
    First consider duplicate pairs that involve $P_0$ nontrivially, that is duplicate pairs of the form $(p,r)$ such that $p$ is a non-empty downset $S$ of $P_0$ above a ladder $l_i$ and $r$ is a ladder $l_k, k\leq i$. Since $(p,r)$ is a duplicate pair, it follows that $p$ is also a downset in $R_i$ and hence $S$ is downset in $R_0$. In this way, every duplicate pair of this form corresponds to some $S\in D(P_0)\cap D(R_0)$, contributing a total of $|D(P_0)\cap D(R_0)|$ to $|I_{m+i+1,i}|$. The remaining duplicate pairs (those that do not involve $P_0$ nontrivially) are of the form $(l_t,l_{m+i+1-t})$ with $t$ taking integer values in the ranges, \begin{align}
            \frac{m+i+1}{2}+1\leq t \leq i \ &\text{ if $m+i$ is odd}\nonumber\\
            \frac{m+i+1}{2}+\frac{1}{2}\leq  t \leq i \ &\text{  if $m+i$ is even,}\nonumber
    \end{align} where the lower bound follows from the requirement in definition \ref{def_pairs} that $p\geq \frac{s}{2}$ and $p\not=r$, and the upper bound follows from our assumption that $p$ is contained in the chain part of $P_i$. Enumerating the allowed values of $t$ in each case, we find that for all values of $i>m$ the number of duplicate pairs can be written as,
    \begin{align}\label{eR_term_2}
        |I_{m+i+1}|=
            |D(P_0)\cap D(R_0)|+\left\lfloor\frac{i-m}{2}\right\rfloor.
    \end{align}

    Finally, plugging \eqref{eR_term_1} and \eqref{eR_term_2} into \eqref{eq formula for dPQ} gives,
        \begin{align}\label{d:formula}
    d_{m+i+1}(P_iR_i)=C(P_0R_0)+\left\lceil \frac{i-m}{2}\right\rceil,
    \end{align}
    where we define,
\begin{align}
        C(P_0R_0):=|D(P_0)|+|D(R_0)|-|D(P_0)\cap D(R_0)|.
    \end{align}
   
Since the $i$ dependence is contained in the $\left\lceil \frac{i-m}{2}\right\rceil$ term, $d_{m+i+1}(P_iR_i)$ increases by 1 one every other step, namely when $i+m$ is odd.
\end{proof}

We illustrate lemma \ref{lemma:ladderextension} with an example. Consider,
\begin{equation}\label{eg pq}\begin{tikzpicture}[node distance= 8mm]
       \node(p)at(-2.5,-0.5){$P_0R_0=$};
        \node[D](a7) at (0,0) {};
        \node[D, below left of = a7](a5){};
        \node[D, below right of = a7](a6){};
        \node[D, below left of = a5](a1){};
        \node[D, below left of = a6](a2){};
        \node[D, below right of = a6](a3){};
        \draw(a1)--(a5)--(a7)--(a6)--(a3);
        \draw(a5)--(a2)--(a6);
        \node[D](b7) at (2.5,0) {};
        \node[D, below of = b7,node distance=6mm](b5){};
        \node[D, right of = b5,node distance=1cm](b6){};
        \node[D, below left of = b5](b1){};
        \node[D, below left of = b6](b2){};
        \node[D, below right of = b6](b3){};
        \draw(b1)--(b5)--(b7);
        \draw(b6)--(b3);
        \draw(b5)--(b2)--(b6);
    \end{tikzpicture}
\end{equation}
    and count their downsets to obtain $
    C(P_0R_0)=7+9-6=10$. 
 
Now extend the seed posets by a ladder of length $i=7$. The number of downsets of $P_7R_7$ of cardinality $m+i+1=14$ is given by \eqref{d:formula} as $d_{14}(P_7R_7)=11$.
We can confirm this by constructing the downsets explicitly to obtain,

\hspace{-10mm} \includegraphics[height=.95in]{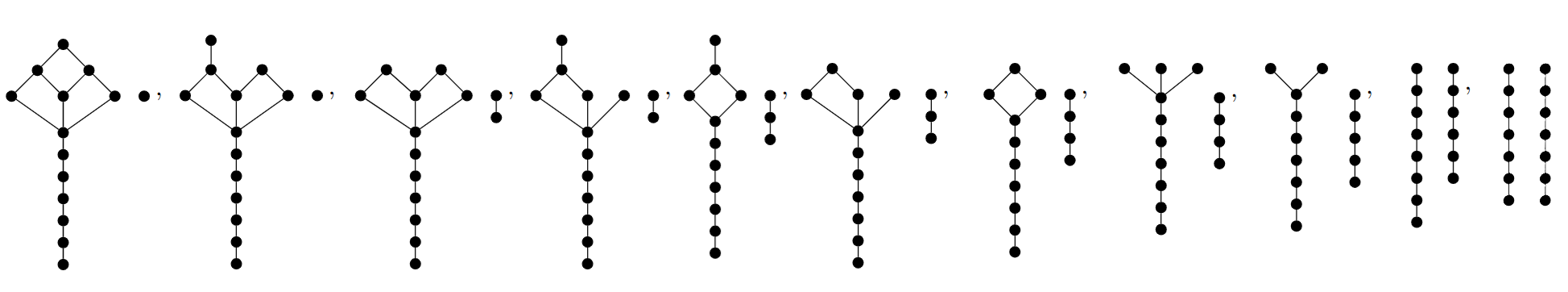}.

\begin{lemma}\label{lem PiQi min}
    Suppose $P_0$ and $R_0$ are distinct and $i\geq1$. Then no witness to $D_{i+m+1}(P_iR_i)$ has size less than $P_iR_i$.
\end{lemma}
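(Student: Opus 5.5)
The plan is to show that any witness $Q$ of $\Gamma:=D_{i+m+1}(P_iR_i)$ must contain disjoint copies of $P_i$ and $R_i$, which gives $|Q|\geq 2(m+i)=|P_iR_i|$. I would first record two facts about $\Gamma$.

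\textbf{Fact (a): no element of $\Gamma$ is connected.} A downset of $P_iR_i$ of size $m+i+1$ takes a downset of $P_i$ and a downset of $R_i$. Each of these has size at most $m+i<m+i+1$, so both are non-empty, and the downset has at least two components.

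\textbf{Fact (b): $P_i$ and $R_i$ each occur as a downset of some element of $\Gamma$.} Since $i\geq 1$, $R_i$ has a unique minimal element, namely the bottom of its chain $l_i$. So $P_i\sqcup\{\bullet\}$ is a downset of $P_iR_i$ of size $m+i+1$, and it lies in $\Gamma$. Symmetrically, $R_i\sqcup\{\bullet\}\in\Gamma$. Hence any witness $Q$ contains a downset $A\cong P_i$ and a downset $B\cong R_i$, because downsets of downsets are downsets.

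The main step is to show that $A\cap B=\emptyset$; I expect this to be the only point requiring care.

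Suppose instead that $A\cap B\neq\emptyset$. The intersection of two downsets is a non-empty downset, so it contains a minimal element of $Q$. Since $i\geq1$, $A$ has a unique minimal element $a$ (the bottom of its chain), and likewise $B$ has a unique minimal element $b$. Therefore $a=b$, and every element of $U:=A\cup B$ lies above this common bottom. So $U$ is a connected downset of $Q$.

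Because $P_0\neq R_0$, we have $P_i\not\cong R_i$. As $|A|=|B|=m+i$, this forces $A\neq B$, so $U\setminus A\neq\emptyset$.

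Now choose $x$ minimal in $U\setminus A$. Every element strictly below $x$ lies in $U$ (since $U$ is a downset) and hence, by minimality of $x$, in $A$. So $A\cup\{x\}$ is a downset of $Q$ of size $m+i+1$. It is connected, since all its elements lie above $a$. By Fact (a), no element of $\Gamma$ is connected, so $A\cup\{x\}\notin\Gamma$. This contradicts $Q$ being a witness.

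Hence $A$ and $B$ are disjoint, and therefore $|Q|\geq|A|+|B|=2(m+i)=|P_iR_i|$, which proves the lemma.
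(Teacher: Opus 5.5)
Your proof is correct and is essentially the paper's argument. Both use $P_i l_1, l_1 R_i\in D_{i+m+1}(P_iR_i)$ to force copies of $P_i$ and $R_i$ in any witness, and both show that overlapping copies share their unique minimal element. That makes their union connected, of size at least $m+i+1$, and so it yields a connected downset of size $m+i+1$, which no element of $D_{i+m+1}(P_iR_i)$ is.

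Your explicit downset $A\cup\{x\}$, with $x$ minimal in $U\setminus A$, simply makes concrete the paper's remark that such a union has connected downsets of every size. Like the paper, you use $P_i\not\cong R_i$ without justifying it. It follows by deleting the $i$ successive least elements from both sides, which an isomorphism must preserve, leaving $P_0\cong R_0$.
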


\begin{proof}
    First, note that no poset in $D_{i+m+1}(P_iR_i)$ is connected because the largest connected component of $P_iR_i$ has size $m+i$.
    
    Suppose $Q$ is a witness to $D_{i+m+1}(P_iR_i)$ of size less than $P_iR_i$. Since we have both $P_il_1$ and $l_1R_i$ in $D_{i+m+1}(P_iR_i)$, $Q$ must have both $P_i$ and $R_i$ as downsets.  Since $Q$ has size less than $P_iR_i$ there must be a copy of $P_i$ and a copy of $R_i$ as downsets of $Q$ whose union in $Q$ is connected; let $S$ be this union.  Since the intersection of these copies of $P_i$ and $R_i$ must be a downset of each of them it must in particular have a unique minimal element.  Consequently, $S$ is connected and has a unique minimal element and so has at least one connected downset of every size from $1$ to $|S|$.  However, $P_0$ and $R_0$ are distinct and so $|S|\geq m+i+1$.  Thus $S$ and hence also $Q$ has a connected downset of size $m+i+1$, contradicting the fact that $D_{m+i+1}(P_iR_i)$ does not contain any connected posets.
\end{proof}

\begin{proposition}
    Fix some distinct $P_0$ and $R_0$. Then for any $i>m$, $D_{m+i+1}(P_iR_i)$ is a node in covtree with $k=C(P_0R_0)+\left\lceil \frac{i-m}{2}\right\rceil$ elements of size $n=m+i+1$ and with a smallest witness of size $|P_iR_i|=2m+2i$. Then for any constant $c$, we have that $|P_iR_i|$ is strictly greater than $n+k+c$ for $i-\lceil\frac{i-m}{2}\rceil>1+C(P_0R_0)-m+c$.
\end{proposition}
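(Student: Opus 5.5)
The plan is to assemble the proposition from \cref{lemma:ladderextension} and \cref{lem PiQi min}, then finish with an inequality manipulation.

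\textbf{Node and value of $k$.} By definition $D_{m+i+1}(P_iR_i)$ is the set of downsets of size $m+i+1$ of the poset $P_iR_i$. Hence $P_iR_i$ is a witness, and $D_{m+i+1}(P_iR_i)$ is a node of covtree at level $n=m+i+1$. Since $|P_iR_i|=2m+2i\geq n$, this level is nonempty. Its cardinality $k$ is read off from formula \eqref{d:formula}, established in the proof of \cref{lemma:ladderextension} under the hypothesis $i>m$:
\begin{equation*}
k=C(P_0R_0)+\left\lceil \frac{i-m}{2}\right\rceil .
\end{equation*}
The hypothesis $i>m\geq 1$ gives $i\geq 1$. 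So \cref{lem PiQi min} applies and shows that no witness has size less than $|P_iR_i|=2m+2i$. Since $P_iR_i$ is itself a witness, it is a smallest witness.

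\textbf{The inequality.} Write $|P_iR_i|-n-k=2m+2i-(m+i+1)-C(P_0R_0)-\lceil\frac{i-m}{2}\rceil$. This simplifies to
\begin{equation*}
|P_iR_i|-n-k=i-\left\lceil\frac{i-m}{2}\right\rceil+m-1-C(P_0R_0).
\end{equation*}
This is greater than $c$ exactly when $i-\lceil\frac{i-m}{2}\rceil>1+C(P_0R_0)-m+c$, which is the stated condition. The left-hand side grows like $(i+m)/2$, so for fixed seeds the condition holds for all sufficiently large $i$, whatever $c$ is. Combined with minimality, every witness then has size strictly greater than $n+k+c$.

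\textbf{Main obstacle.} There is no real difficulty here, since everything is inherited from the two lemmas. The only points to check are that the hypotheses match. Formula \eqref{d:formula} needs $i>m$, and $P_0$ and $R_0$ must satisfy the standing assumptions of the construction: equal size $m$ and neither a downset of the other. Those assumptions underlie the duplicate count in \cref{lemma:ladderextension}. Distinctness is needed for \cref{lem PiQi min}.
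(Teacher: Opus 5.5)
Your proposal is correct and matches the paper's proof. The node property, the value of $k$, and the smallest-witness size come directly from \cref{lemma:ladderextension} (via \eqref{d:formula}) and \cref{lem PiQi min}, and the inequality is the rearrangement $|P_iR_i|-n-k=i-\lceil\frac{i-m}{2}\rceil+m-1-C(P_0R_0)>c$; the extra assumption you flag (neither seed a downset of the other) is automatic here, since distinct posets of the same size $m$ cannot be downsets of one another.
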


\begin{proof}
    \cref{lemma:ladderextension} and \cref{lem PiQi min} give the first part. The second part is immediate from the construction.\end{proof}

\begin{remark}\label{remark:general construction}
    We briefly outline the general construction: consider two posets $P_0$ and $R_0$ with $p:=|P_0|\leq|R_0|$ and $D_p(P_0)\cap D_p(R_0)=\varnothing$. Consider the poset obtained by extending $P_0$ and $R_0$ by a connected Newtonian poset of size $l$ downwards and taking their disjoint union. Denote this by $P_lR_l$. If we now consider its downsets of size $n$ we see that, provided $n$ is large enough and the number of minimal elements of $P_lR_l$ is small enough so that every downset of size $n$ must involve both components of $P_lR_l$ and the parts from both sides will be connected, then we can count by the same methods as the examples above.  Thus, with a suitable constraint on the number of minimal elements in the Newtonian part and a suitable relationship between $n$, $l$ and  the size of the Newtonian part, we again obtain a family of examples $D_n(P_lR_l)$ with smallest witness $P_lR_l$ so that for any $c$, $|R_lR_l|$ is eventually larger than $n+k+c$.

\end{remark}
\begin{remark}\label{rem order dim}
    One might hope that putting a restriction on the posets might enable us to regain $n+k-1$ as a bound for the size of a minimal witness, as it did in the case $k=2$.  Equation \eqref{eg pq} shows that restricting to order dimension 2 will not suffice to bring the bound to $n+k-1$, since the posets in that example have order dimension 2.  
    A poset has \emph{order dimension} $p$ if it can be written as an intersection of $p$ total orders, where the intersection of posets on a common underlying set is the poset on that underlying set where one element is greater than another if and only if they are so related in all of the posets being intersected.  Order dimension 2 is particularly special in the context of causal set theory since it can be faithfully embedded in 2d Minkowski spacetime, see for instance \cite{Glaser:2017sbe}.
\end{remark}

\subsection*{Example 3: A family of height 2}
In this section we show that restricting to height 2 posets is not sufficient to regain any bound of the form $n+k+c$ for a constant $c$.

\begin{definition}
    Let $\Lambda_i$ be the poset with $i$ minimal elements and one maximal element above all the minimal elements.
    
    Let $H_\ell$ be the poset whose connected components are $\ell$ copies of $\Lambda_2$ and one copy of $\Lambda_{3(\ell-1)}$ as illustrated in \cref{fig height2family}.
\end{definition}

\begin{figure}[h]
    \includegraphics{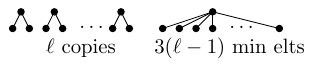}
    \caption{A poset with few downsets of size $3\ell$ relative to its size.}\label{fig height2family}
\end{figure}

\begin{lemma}\label{lem k in the height 2 eg}
    $d_{3\ell}(H_\ell) = \ell+2$.
\end{lemma}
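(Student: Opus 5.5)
The plan is a direct enumeration. A downset of a disjoint union is exactly a choice of one downset in each connected component, so I will first list the downsets of each component of $H_\ell$, then describe the isomorphism type of a size-$3\ell$ downset of $H_\ell$ by a few integer parameters, and finally count which parameter values are realisable.

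\emph{Component downsets.} A copy of $\Lambda_2$ has downsets $\varnothing$, one point, two incomparable points, and all of $\Lambda_2$ (size $3$). The component $\Lambda_{3(\ell-1)}$ has downsets consisting of $j$ incomparable minimal points for $0\le j\le 3\ell-3$, together with the whole of $\Lambda_{3\ell-3}$, which has size $3\ell-2$. Hence every downset of $H_\ell$ is a disjoint union of:
\begin{itemize}
\item $a$ full copies of $\Lambda_2$;
\item $\epsilon\in\{0,1\}$ copies of the full $\Lambda_{3\ell-3}$;
\item $b$ isolated points.
\end{itemize}
Its isomorphism type is determined by the triple $(a,\epsilon,b)$. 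Distinct triples give non-isomorphic posets: the connected components are $\Lambda_2$'s, at most one $\Lambda_{3\ell-3}$, and singletons, and these components have distinct sizes when $\ell\ge 2$.

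\emph{Case $\epsilon=1$.} The size condition is $3\ell-2+3a+b=3\ell$, that is $3a+b=2$. This forces $a=0$ and $b=2$. The two points can be taken from one (or two) of the small components, since $\ell\ge1$, so this case contributes exactly one type.

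\emph{Case $\epsilon=0$.} The size condition is $3a+b=3\ell$, so $b=3\ell-3a$ with $0\le a\le \ell$. Realisability requires enough isolated points outside the chosen full copies. These come from the remaining $\ell-a$ copies of $\Lambda_2$ (at most two each) and from the minimal elements of $\Lambda_{3\ell-3}$ (at most $3\ell-3$). So I need
\[
3\ell-3a\le 2(\ell-a)+3\ell-3, \quad\text{equivalently}\quad a\ge 3-2\ell .
\]
This holds for every $a\ge 0$ once $\ell\ge 2$. Each $a\in\{0,\dots,\ell\}$ therefore gives a distinct type, contributing $\ell+1$ types. Adding the single type from the case $\epsilon=1$ gives $d_{3\ell}(H_\ell)=\ell+2$.

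The only delicate point is the degenerate small case. For $\ell=1$ the component $\Lambda_0$ is degenerate (read as a single point, or as empty), and the distinctness and realisability checks must be done by hand. I would either verify $\ell=1$ directly or state the lemma for $\ell\ge2$, which is all the family needs. Apart from this, the main care needed is to confirm that no other component downset shapes arise, which follows from the height-$2$ structure.
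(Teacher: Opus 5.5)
Your proposal is correct and follows the paper's proof. Like the paper, you split according to whether the downset contains the maximal element of $\Lambda_{3(\ell-1)}$, which gives the single type $\Lambda_{3\ell-3}$ plus two isolated points, and otherwise count by the number $a\in\{0,\dots,\ell\}$ of full copies of $\Lambda_2$; your realisability inequality and component-size distinctness check make explicit what the paper simply asserts as ``all such values are possible''. Your caution about $\ell=1$ is justified: there $H_1=\Lambda_2\sqcup\Lambda_0$ has $d_3(H_1)=2$ (or $1$ if $\Lambda_0$ is read as empty), so the statement genuinely requires $\ell\ge 2$, which the paper's proof assumes tacitly and which is harmless because the family is only used for large $\ell$.
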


\begin{proof}
    Note that if a downset contains the maximal element of some $\Lambda_{i}$ then it contains the entire $\Lambda_i$.  In particular, if a downset of size $3\ell$ of $H_\ell$ contains the maximal element of the $\Lambda_{3(\ell-1)}$ then since $|\Lambda_{3(\ell-1))}| = 3\ell-2$, this downset must be $\Lambda_{3(\ell-1)}$ along with 2 incomparable elements.  
    The remaining downsets of size $3\ell$ of $H_\ell$ are thus determined by how many $\Lambda_2$s they contain with the rest being incomparable elements.  There can be between $0$ and $\ell$ (inclusive) copies of $\Lambda_2$ and all such values are possible.  In total this gives $\ell+2$ downsets of size $3\ell$.
\end{proof}

\begin{lemma}\label{lem witness size in height 2 eg}
    No witness to $D_{3\ell}(H_\ell)$ has size less than $|H_\ell| = 6\ell-2$. 
\end{lemma}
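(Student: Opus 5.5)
The plan is to locate two specific members of $D_{3\ell}(H_\ell)$ as downsets inside an arbitrary witness $Q$, and show that they can overlap in at most $2$ elements. Since each has size $3\ell$, this forces $|Q|\geq 6\ell-2$.

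By the proof of \cref{lem k in the height 2 eg}, $\Gamma:=D_{3\ell}(H_\ell)$ contains two posets of interest. The first is $A:=\Lambda_{3(\ell-1)}$ together with $2$ isolated points. The second is $B:=$ the disjoint union of $\ell$ copies of $\Lambda_2$. Both have size $3\ell$. Every poset in $\Gamma$ has height at most $2$, and each of its connected components has at most one element of height $2$; the components are copies of $\Lambda_2$, copies of $\Lambda_{3(\ell-1)}$, or isolated points. Let $Q$ be a witness and fix downsets $X\cong A$ and $Y\cong B$ of $Q$. Then $X\cup Y$ is a downset of $Q$, so every downset of $X\cup Y$ is a downset of $Q$. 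In particular, every downset of $X\cup Y$ of size $3\ell$ lies in $\Gamma$. Since $|X|+|Y|=6\ell$, it suffices to show $|X\cap Y|\leq 2$.

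The first step is to show that $X\cap Y$ is an antichain of elements minimal in both $X$ and $Y$. Note that $X\cap Y$ is a downset of both $X$ and $Y$. If it contained the top $z$ of the $\Lambda_{3(\ell-1)}$ in $X$, it would contain $3\ell-3>2$ elements below $z$. But in $Y$ every element lies above at most $2$ others, so this is impossible. Conversely, if $X\cap Y$ contained the top of some $\Lambda_2$ of $Y$, that element would have height $2$ in $X$ while covering only $2$ elements; but the only height-$2$ element of $X$ is $z$, which covers $3\ell-3$ elements. Since $\ell\geq 2$ we have $3\ell-3\neq 2$, which is the edge case to check.

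The key step is to show that no minimal element $x$ of the $\Lambda_{3(\ell-1)}$ in $X$ lies in $Y$, so that $X\cap Y$ is contained in the two isolated points of $X$. Suppose $x\in Y$. In $Y$, $x$ lies below the top $y$ of some copy of $\Lambda_2$, whose other minimal element we call $x'$. Consider the downset $S=\Lambda_{3(\ell-1)}\cup\{x',y\}$ of $X\cup Y$, which has size at most $3\ell$. Pad $S$ to size exactly $3\ell$ by adding isolated minimal elements, using the two isolated points of $X$, or further minimal elements of $Y$ if needed; there are enough of these since $X\cup Y$ has at least $3\ell$ minimal elements. The resulting downset has a connected component containing two distinct height-$2$ elements $z$ and $y$, joined through $x$. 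No member of $\Gamma$ has such a component, which contradicts $Q$ being a witness.

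Hence $|X\cap Y|\leq 2$ and $|Q|\geq|X\cup Y|\geq 6\ell-2=|H_\ell|$. This is tight, since taking the two isolated points of $X$ to be the bottoms of one $\Lambda_2$ of $Y$ recovers $H_\ell$ itself. The main obstacle is the padding step: one must check that the padded set is a genuine downset of size exactly $3\ell$ that still contains the offending component. This holds because adding minimal elements never destroys the component through $z$, $x$ and $y$.
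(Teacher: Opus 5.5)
Your proof is correct and follows essentially the paper's route. Both arguments locate a copy of $\Lambda_{3(\ell-1)}$ and the $\ell$ disjoint copies of $\Lambda_2$ in the witness, show that any overlap must occur at minimal elements, and pad the resulting component containing two height-$2$ elements (of size $3\ell-1$ or $3\ell$) to a size-$3\ell$ downset that is not in $D_{3\ell}(H_\ell)$; your bookkeeping via $|X\cup Y|=6\ell-|X\cap Y|$, with shared tops excluded explicitly, is tidier than the paper's. One small slip: $X\cup Y$ can have only $3\ell-1$ minimal elements, not $3\ell$. This is harmless, because $|S|\geq 3\ell-1$ means at most one padding element is ever needed, and in that case $x'$ lies in the $\Lambda_{3(\ell-1)}$, so both isolated points of $X$ lie outside $S$.
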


\begin{proof}
    In view of the downsets identified above, any witness must contain $\ell$ disjoint copies of $\Lambda_2$ and must contain a copy of $\Lambda_{3(\ell-1)}$.  To be smaller than $H_\ell$, a copy of $\Lambda_2$ must overlap with either another $\Lambda_2$ or a $\Lambda_{3(\ell-1)}$, while still maintaining each as a downset.  Consequently, they must share minimal elements, however the four ways of doing so: overlapping two $\Lambda_2$ at two minimal elements or at one minimal element, or overlapping a $\Lambda_2$ and a $\Lambda_{3(\ell-1)}$ at two minimal elements or at one mimial element, each become a downset in the purported witness and are of of size $4, 5, 3\ell - 1$ or $3\ell$ respectively. Enough minimal elements remain from the other $\Lambda_i$s to expand each of these into a downset of size $3\ell$, but none of these downsets are in $D_{3\ell}(H_\ell)$ giving a contradiction. 
\end{proof}

\begin{proposition}\label{thm height 2}
    $D_{3\ell}(H_\ell)$ is a node of covtree with $k=\ell+2$ elements of size $n=3\ell$ and with smallest witness size $|H_\ell| = 6\ell-2$.  Furthermore, all the posets of $D_{3\ell}(H_\ell)$ as well as $H_\ell$ itself have height $2$ and for any constant $c$, we have that $|H_\ell|$ is  strictly greater than $n+k+c = 4\ell+2+c$ for $\ell>2+c/2$.
\end{proposition}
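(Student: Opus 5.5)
The plan is to assemble Proposition~\ref{thm height 2} from the two preceding lemmas, plus a height check and a short inequality. First, $D_{3\ell}(H_\ell)$ is a node of covtree because $H_\ell$ is, by definition, a witness to the set of its own downsets of size $3\ell$. This needs $|H_\ell|\geq 3\ell$, which holds since $|H_\ell| = 3\ell + (3(\ell-1)+1) = 6\ell-2$. The count $k=\ell+2$ is \cref{lem k in the height 2 eg}. The statement that the smallest witness has size $|H_\ell|$ follows by combining two facts: \cref{lem witness size in height 2 eg} says no witness is smaller than $6\ell-2$, and $H_\ell$ is itself a witness of exactly that size.

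For the height claim, every component of $H_\ell$ is some $\Lambda_i$, which has height $2$, so $H_\ell$ has height $2$. A downset of a height-$2$ poset has height at most $2$. For height exactly $2$, I would use the enumeration from the proof of \cref{lem k in the height 2 eg}. If a downset of size $3\ell$ contained no maximal element, it would be an antichain of $3\ell$ minimal elements. However, $H_\ell$ has only $2\ell+3(\ell-1)=5\ell-3$ minimal elements, so such antichains do exist once $\ell\geq 2$. So the antichain downset (the case of zero copies of $\Lambda_2$) does have height $1$. I would therefore state the height claim as ``height at most $2$'', or note that the claim holds for all posets except this antichain. If the intended reading is that $H_\ell$ has height $2$ and all downsets have height $\le 2$, that is immediate.

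The last step is arithmetic. With $n=3\ell$ and $k=\ell+2$, we have $n+k+c=4\ell+2+c$, and $6\ell-2>4\ell+2+c$ is equivalent to $2\ell>4+c$, that is, $\ell>2+c/2$. None of these steps is a real obstacle, since the substantive work is in the two lemmas. The only point requiring care is the height phrasing discussed above.
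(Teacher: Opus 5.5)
Your proposal is correct and follows the paper's own route: the first part comes from \cref{lem k in the height 2 eg} and \cref{lem witness size in height 2 eg}, with $H_\ell$ itself witnessing the node, and the rest is the check $6\ell-2>4\ell+2+c\iff \ell>2+c/2$. Your height caveat is also right: for $\ell\ge 2$ the downset containing no copy of $\Lambda_2$ is an antichain made of $3\ell$ of the $5\ell-3$ minimal elements, so it has height $1$, and the statement is only true in the form ``height at most $2$'', which is all the paper's ``immediate by construction'' actually justifies.
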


\begin{proof}
    \cref{lem k in the height 2 eg} and \cref{lem witness size in height 2 eg} give the first part.  The furthermore is immediate by construction.
\end{proof}

\section{The exchange graph}\label{subsec exchange graph}

Here we introduce the notion of the \emph{exchange graph} and apply it to obtain the following improved bound on witness size. Recall that $Q$ is a minimal witness to $\Gamma_n$ if no proper downset of $Q$ is itself a witness to $\Gamma_n$.

\begin{theorem}
    \label{lemma: improved bound general}
    If $Q$ is a minimal witness to $\Gamma_n=\{P_1,\ldots ,P_k\}$, with $k\geq 3$, then $|Q|\leq n(k-1)$.
\end{theorem}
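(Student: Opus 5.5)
The plan is to use minimality to reduce the theorem to a covering problem: choose one copy of each $P_i$ inside $Q$ so that the union of the chosen copies is small.

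\textbf{Reduction.} As in the proof of Lemma 3.1 of \cite{Dowker:2019qiz} recalled in \cref{sec background}, any union $U$ of downsets of $Q$ is again a downset of $Q$. Its downsets of size $n$ are downsets of size $n$ of $Q$, so $U$ has no spurious downsets. Hence if $U$ contains a copy of every $P_i$, then $U$ is a witness to $\Gamma_n$, and minimality forces $U=Q$. So it suffices to exhibit copies $C_1,\dots,C_k$ of $P_1,\dots,P_k$ in $Q$ with $|C_1\cup\dots\cup C_k|\le n(k-1)$. Equivalently, the copies should overlap by a total of at least $n$ elements.

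\textbf{Exchange graph.} I would introduce the exchange graph of $Q$. Its vertices are the downsets of $Q$ of size $n$, and two vertices are adjacent if their symmetric difference has size $2$, i.e.\ one is obtained from the other by removing one maximal element and adding one new minimal-in-the-complement element. The basic lemma to prove first is: for any two vertices $D,D'$ there is a path from $D$ to $D'$ all of whose vertices lie inside $D\cup D'$. To see this, pick $x$ maximal in $D\setminus D'$ and $y$ minimal in $D'\setminus D$; then $(D\setminus\{x\})\cup\{y\}$ is again a downset of size $n$ and is strictly closer to $D'$. Label each vertex by its isomorphism class $P_i$.

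\textbf{Greedy construction.} I would build the union greedily, maintaining a downset $U$ and the set $S$ of classes already represented inside $U$.
\begin{itemize}
\item Start with two classes. A path between a copy of $P_1$ and a copy of $P_2$ must contain an edge joining two distinct classes. So $U=A\cup B$ with $A,B$ adjacent covers two classes with only $|U|=n+1$ elements.
\item Add the remaining classes one at a time. For an unseen class, take a copy $W$ and a path from a vertex $V\subseteq U$ to $W$ inside $V\cup W$. Let $W^*$ be the first vertex on this path whose class is not in $S$, and set $U\leftarrow U\cup W^*$ together with its predecessors on the path.
\end{itemize}
One must check that each new class adds at most $n$ elements, and strictly fewer whenever $W^*$ meets the current union. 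Note that $U\cup W^*$ may also pick up further classes for free, which only helps.

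\textbf{Main obstacle.} The naive accounting is $(n+1)+n(k-2)=n(k-1)+1$, which is one element too many. The crux, and the only place $k\ge3$ is used, is to squeeze out one extra element of saving. I would try to show that at some step the new copy can be chosen to meet the current union $U$.

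Suppose instead that every copy of every unseen class is disjoint from $U$, and take such a copy $W$. Walk from $A$ to $W$ inside $A\cup W$; every vertex on this path other than $W$ meets $A$. So the first vertex on the path of an unseen class either meets $U$, which gives the saving, or equals $W$. In the latter case, the predecessor $W'$ of $W$ has a seen class and shares $n-1$ elements with $W$.

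In that case I would restructure the choice. Replace $U$ by the union of a different representative of a seen class together with $W$, so that $W$ and $W'$ together cost only $n+1$ for two classes. Then the original starting pair $A,B$ must be re-charged, using that $W'$'s class is already represented so that one of $A,B$ becomes redundant.

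Making this exchange argument airtight, in particular verifying that some representative becomes genuinely redundant, is where I expect the real difficulty to lie. If the direct argument fails, a fallback is to choose the starting edge $A$--$B$ more cleverly. For example, take it on a shortest path between two classes in the quotient graph on classes, and use connectivity of that quotient graph to find a class vertex of degree at least $2$. Its two incident edges could then share a common copy.
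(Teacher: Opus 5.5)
\textbf{There is a genuine gap: the proposal does not prove the one-element saving that separates the theorem from the weaker bound.} Your reduction to a covering problem and your exchange-graph path lemma are correct, and they are the tools the paper uses. But the proposal stops exactly where the theorem goes beyond \cref{lemma: improved_bound}: the greedy count gives only $n(k-1)+1$.

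Your re-charging step works in only one of its two subcases.
If $W'$ has the class of $B$, then $W'\subseteq A\cup W$ because the geodesic lies in $A\cup W$. So $A\cup W$ already carries three classes in $2n$ elements.
If $W'$ has the class of $A$ and you drop $A$, you are left with $B\cup W'\cup W$, where $W'=(W\setminus\{w\})\cup\{a\}$ for some $a\in A$. Under your supposition $B\cap W=\emptyset$, so this set has $2n+1$ elements whenever $a$ is the single element of $A\setminus B$. Nothing in your sketch excludes that.

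The fallback also fails. Two edges of the class quotient graph at a common class need not be realized through a common copy of that class. In the example of \cref{fig exchange graph} the shortest three-class path is $A-B-B-B-C$, so no copy of $B$ is adjacent to both a copy of $A$ and a copy of $C$. Using two different middle copies can cost up to $2n+2$.

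\textbf{What is missing} is an argument that some downset with at most $2n$ elements contains copies of three distinct classes; adding one copy of each remaining class then gives $2n+(k-3)n=n(k-1)$.

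The paper gets this from a shortest path in $G_n(Q)$ whose vertices represent three distinct classes. Minimality forces the form $P_1-P_2-\cdots-P_2-P_3$.
With one middle copy, the union of the path has at most $n+2\le 2n$ elements.
With at least two middle copies, compare the path with a geodesic between the anchor copies, which lies in their union. That geodesic cannot be strictly shorter. Otherwise it either contains a third class, or it consists only of copies of $P_1$ and $P_3$; in the latter case, attaching the first or last copy of $P_2$ to a suitable initial or final segment gives a shorter three-class path. Hence the path is itself a geodesic, and its union is $P_1\cup P_3$ (\cref{lem A cup B general}).

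Your own line can also be closed with one more choice. Suppose $W$ is disjoint from $A\cup B$. Build the geodesic from $A$ to $W$ by deleting the elements of $A$ in the reverse of a linear extension that starts with a minimal element $b\in A\cap B$ (nonempty since $k\ge 3$ forces $n\ge 3$), while adding the elements of $W$ along a linear extension. Then $W'=(W\setminus\{w\})\cup\{b\}$ lies in both $A\cup W$ and $B\cup W$, and you can finish as follows.
If $W'$ has the class of $A$, use $B\cup W$.
If $W'$ has the class of $B$, use $A\cup W$.
If the class of $W'$ is new, then $b\in W'\cap A$ gives $|A\cup B\cup W'|\le 2n$.
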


To prove theorem \ref{lemma: improved bound general}, we begin with a definition.

\begin{definition} The \emph{exchange graph} of size $n$ downsets of a poset $Q$ is a graph $G_n(Q) = (V, E)$ where $V$ is the set of downsets of size $n$ of $Q$ and for $A, B \in V$ there is an edge from $A$ to $B$ if $A$ and $B$ are downsets that differ by exactly one element. \label{def:graph}
\end{definition}

Note that in $G_n(Q)$, the vertices are not the downsets of size $n$ \emph{up to isomprphism}, but rather \emph{all} downsets of size $n$.  The edge relation requires remembering how the downsets lived in $Q$.  We may have many vertices of $G_n(Q)$ which are isomorphic to each other.

    Examples are shown in \cref{triangles in exchange graph} and \cref{fig exchange graph}.
 \begin{figure}[h]
        \includegraphics{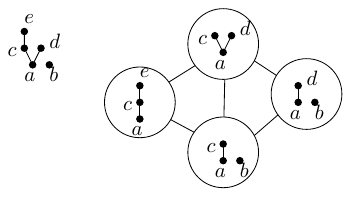}
        \caption{Exchange graph for the poset on the left with the labelling as given.  This example will be useful for illustrating how short paths sometimes need special consideration, cf. \cref{rem m<3 is special}.}\label{triangles in exchange graph}
    \end{figure}

    \begin{figure}[h]
        \includegraphics{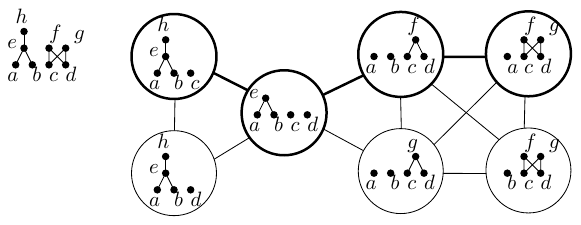}
        \caption{Exchange graph $G_5$ for the poset on the left with the labelling as given.  The bolded path in the exchange graph gives a minimal $A-B-\cdots-B-C$ path as discussed later in this section, cf. setup to \cref{lem A cup B general}.}\label{fig exchange graph}
    \end{figure}

    \begin{remark}
In the case that the poset $Q$ has order dimension 2 (see \cref{rem order dim}), $G_n(Q)$ can be upgraded from a graph to an acyclic diagraph using the order on the space dimension. This is proved and discussed in more detail in \cite{Smmath}.

\end{remark}

The following proposition will be of use.

\begin{proposition}
    \label{thm diameter} For any poset $Q$ and positive integer $n\leq |Q|$, the graph $G_n(Q)$ is connected.  Furthermore, for $A, B \in V$, the distance between $A$ and $B$ is $|A\setminus (A\cap B)|$, and so in particular the diameter of $G_n(Q)$ is at most $n$.
\end{proposition}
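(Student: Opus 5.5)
The plan is to prove the distance formula directly; connectivity and the diameter bound then follow. Write $d(A,B)$ for graph distance in $G_n(Q)$ and set $m(A,B)=|A\setminus B|$. Since $|A|=|B|=n$, we have $|A\setminus B|=|B\setminus A|=m(A,B)$. Two downsets are adjacent exactly when $m=1$: they differ by exactly one element, so one element is swapped for another.

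\emph{Lower bound.} Along any edge $A'-A''$ the set $A''$ is obtained from $A'$ by deleting one element and adding one. Hence $m(A'',B)$ differs from $m(A',B)$ by at most $1$, since $|A''\setminus B|$ changes by at most one. A path from $A$ to $B$ must drive $m$ from $m(A,B)$ down to $0$, so it has at least $m(A,B)$ edges. This gives $d(A,B)\geq |A\setminus(A\cap B)|$.

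\emph{Upper bound, by induction on $m=m(A,B)$.} If $m=0$ then $A=B$. If $m\geq 1$, we construct a neighbour $A'$ of $A$ with $m(A',B)=m-1$.
\begin{itemize}
\item Let $a$ be a maximal element of $A$ lying in $A\setminus B$. This exists: take any element of $A\setminus B$ and move up within $A$ to a maximal element $a$ of $A$ above it. Then $a\notin B$, because $B$ is a downset and would otherwise contain the element below $a$. Consequently $A\setminus\{a\}$ is a downset.
\item Let $b$ be a minimal element of $B\setminus A$. Every element strictly below $b$ lies in $B$, since $B$ is a downset, and is not in $B\setminus A$ by minimality of $b$, so it lies in $A$. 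It lies in $A\setminus\{a\}$ as well: if $a<b$ then $a\in B$, contradicting $a\notin B$.
\item Therefore $A'=(A\setminus\{a\})\cup\{b\}$ is a downset of size $n$, adjacent to $A$, with $|A'\setminus B|=m-1$.
\end{itemize}
Induction then yields a path of length $m(A,B)$. In particular any two vertices are joined, so $G_n(Q)$ is connected, and since $m(A,B)\leq n$ the diameter is at most $n$.

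The only delicate point is choosing the two exchanged elements so that the intermediate set stays a downset. That is the content of the two items above; everything else is routine.
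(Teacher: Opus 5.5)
Your proof is correct and follows essentially the same route as the paper: induction on $|A\setminus(A\cap B)|$, exchanging a maximal element of $A\setminus X$ (automatically maximal in $A$) for a minimal element of $B\setminus X$, with the lower bound coming from the fact that each edge exchanges only one element. Your version also spells out the checks the paper leaves implicit, such as why the new set is a downset and in particular why $a\not< b$.
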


\begin{proof}

We proceed by induction on $|A\setminus (A\cap B)|$. Note that if $|A \setminus (A \cap B)| = 1$ then $A$ and $B$ are adjacent in $G_n(Q)$. Suppose that there exists a path of length $k-1$ between any pair of vertices, $A$ and $B$, satisfying $|A \setminus (A \cap B)| = k-1$ for some $k\geq 2$. We will now show that when $|A \setminus (A \cap B)| = k$ there exists a vertex $A'$ such that $|A'\setminus(A'\cap A)|=1$ and $|A'\setminus(A'\cap B)|=k-1$, and the result follows.

Let $X = A \cap B$, $\{a_1, \dots, a_k\}=A\setminus X$ and $\{b_1, \dots, b_k\}=B\setminus X$. Without loss of generality, let $a_k$ be maximal in $A\setminus X$ (hence also maximal in $A$) and $b_k$ be minimal in $B\setminus X$. Then $A' = X\cup \{b_k\} \cup \{a_1, \dots, a_{k-1}\}$ is a vertex in $G_n(Q)$ satisfying $|A' \setminus (A' \cap B)| = k-1$ and $|A' \setminus (A' \cap A)| = 1$.

Finally, the previous paragraphs show that the distance between $A$ and $B$ is at most $|A\setminus (A\cap B)|$ (and as this holds for any $A$, $B$ the graph is in particular connected), but this distance is also at least $|A\setminus (A\cap B)|$ since one must exchange at least that many elements to get from $A$ to $B$.

\end{proof}

The connectivity of $G_n(Q)$ already gives us a slightly improved upper bound on the size of a minimal witness $Q$ to $\Gamma_n=\{P_1, \ldots, P_k\}$ than the known bound of $nk$, as stated in the following lemma. 

\begin{lemma}\label{lemma: improved_bound}
    If $Q$ is a minimal witness to $\Gamma_n=\{P_1, \ldots, P_k\}$  with $k\geq 2$, then $|Q|\leq n(k-1)+1$.
\end{lemma}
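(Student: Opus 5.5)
We need to prove: if $Q$ is a minimal witness to $\Gamma_n=\{P_1,\dots,P_k\}$ with $k\ge 2$, then $|Q|\le n(k-1)+1$.

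The plan is to build a witness inside $Q$ that is a union of one copy of $P_1$ plus, for each of the other $k-1$ isomorphism classes, at most $n-1$ \emph{new} elements, and then invoke minimality.

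\textbf{Step 1 (choose the copies).} Fix a downset $A_1\subseteq Q$ isomorphic to $P_1$. For each $j=2,\dots,k$, among all downsets of $Q$ isomorphic to $P_j$, choose one, $A_j$, whose distance to $A_1$ in $G_n(Q)$ is as small as possible.

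\textbf{Step 2 (bound each new contribution).} By \cref{thm diameter}, the distance from $A_1$ to $A_j$ equals $|A_j\setminus A_1|$. I claim $|A_j\setminus A_1|\le n-1$. If instead $A_1\cap A_j=\emptyset$, take the path from $A_1$ to $A_j$ constructed in the proof of \cref{thm diameter}, and list the isomorphism types of the vertices along it. I would then use the path to find a better choice of copies, so that the union stays small.

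The cleaner route is to bound directly. Let $S=\bigcup_{j=1}^k A_j$, and order the copies so that $A_2$ is the copy closest to $A_1$ overall. If $A_1\cap A_2\neq\emptyset$, then
\[
|S|\le |A_1\cup A_2|+(k-2)n\le (2n-1)+(k-2)n=n(k-1)+1 .
\]
If every $P_j$-copy is disjoint from every $P_1$-copy, walk along a shortest path in $G_n(Q)$ from $A_1$ to some $A_j$. Its second vertex $B$ is a downset sharing $n-1$ elements with $A_1$, so $B$ is isomorphic to some $P_i$ with $i\ne 1$. Using $B$ as the copy of $P_i$ contradicts disjointness, except in the case $B\cong P_1$. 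So the refined statement to prove is: some pair of distinct isomorphism classes has intersecting copies. The first edge on a path leaving the class of $P_1$ gives two adjacent vertices $A\cong P_a$ and $B\cong P_b$ with $a\neq b$ and $|A\cap B|=n-1$. Such an edge exists because $G_n(Q)$ is connected and $k\ge 2$.

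\textbf{Step 3 (assemble and apply minimality).} Take $S=A\cup B$ together with one copy of each of the remaining $k-2$ classes. Then
\[
|S|\le (n+1)+(k-2)n=n(k-1)+1 .
\]
$S$ is a union of downsets of $Q$, hence a downset of $Q$. Its downsets of size $n$ are downsets of size $n$ of $Q$, so they all lie in $\Gamma_n$, and $S$ contains a copy of every $P_i$. So $S$ is a witness, exactly as in Lemma 3.1 of \cite{Dowker:2019qiz}. Since $Q$ is minimal and $S\subseteq Q$ is a downset witness, $S=Q$, which gives the bound.

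The only real content is the existence of the adjacent pair in Step 2, and connectivity of $G_n(Q)$ supplies it. I do not expect a serious obstacle; the point needing care is that $S$ is genuinely a downset, and this holds because each $A_j$ is a downset of $Q$.
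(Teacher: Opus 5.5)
Your final argument is correct and is exactly the paper's proof: an edge of $G_n(Q)$ joins non-isomorphic downsets $A\cong P_a$ and $B\cong P_b$, their union has size $n+1$, you add one copy of each remaining class, and minimality forces equality with $Q$. You should delete the abandoned detour in Step 2: there $(2n-1)+(k-2)n = nk-1$, not $n(k-1)+1$, so merely intersecting copies would not give the bound, and it is genuinely the adjacency $|A\cap B|=n-1$ that does.
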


\begin{proof}
   Let $Q$ be a minimal witness of $\Gamma_n$. Since $G_n(Q)$ is connected, there exists a copy of $P_i$ that is adjacent to some copy of $P_j$, for some $1\leq i\neq j\leq k$.  The union of these copies of $P_i$ and $P_j$ is a downset of size $n+1$ in $Q$. Taking the union of this downset with a copy of each of the other $P_\ell$ in $Q$ enlarges it to a new downset $Q'$ of size at most $n+1+n(k-2) = n(k-1)+1$. Now, $Q'$ is itself a witness to $\Gamma_n$ as it contains each of the posets in $\Gamma_n$ as a downset. Therefore, if $Q$ is a minimal witness then $Q=Q'$ and the result follows.
\end{proof}

The bound in lemma \ref{lemma: improved_bound} is saturated when $k=2$ where it is known that every minimal witnesses is of size $n+1$ (cf. \cref{sec background} and \cite{Zalel:2020oyf}).

Now we use the connectivity of the exchange graph to further improve on this bound in the case $k\geq 3$ and obtain theorem \ref{lemma: improved bound general}. This may seem like a lot of effort for an improvement of $1$, but these tools will also be foundational for our more detailed consideration of the case $k=3$ in \cref{subsec k=3}.

Let $Q$ be a witness to $\Gamma_n=\{P_1, \ldots, P_k\}$ with $k\geq 3$. Since $G_n(Q)$ is connected (\cref{thm diameter}) we know that there are two distinct $P_j$ in $\Gamma_n$ for which there are adjacent copies of these posets in $G_n(Q)$; without loss of generality say there are copies of $P_1$ and $P_2$ adjacent in $G_n(Q)$. Consider all paths in $G_n(Q)$ from an adjacent $P_1-P_2$ pair to a downset of size $n$ which is not a copy of $P_1$ or $P_2$.  Take a shortest such path $\mathcal{P}$.  Note that there cannot be any internal vertex along $\mathcal{P}$ that is not isomorphic to $P_1$ or $P_2$ for otherwise $\mathcal{P}$ would not be a shortest path. Furthermore, if after the initial $P_1-P_2$ pair there is ever a switch between $P_1$ and $P_2$ along $\mathcal{P}$ then this other adjacent $P_1-P_2$ pair along the path is closer to the downset which is not a copy of $P_1$ or $P_2$, so $\mathcal{P}$ was again not a shortest path.

Therefore, reindexing as necessary, we can without loss of generality assume we have a path $\mathcal{P}$ of the form $P_1-P_2-P_2-\cdots-P_2-P_3$ in $G_n(Q)$ and no shorter path of this form for any three distinct $P_i$ exists in $G_n(Q)$. The copies of $P_1$ and $P_3$ at the ends of the path we will call the \emph{anchor copies} of $P_1$ and $P_3$.

\begin{lemma}\label{lem A cup B general}
    Take assumptions and notation as above.  Let $R$ be the union of the shortest path $\mathcal{P}=P_1-P_2-\cdots-P_2-P_3$ defined above.  Further assume there are at least two copies of $P_2$ in the shortest path. Then
    \begin{enumerate}
        \item  $R=P_1\cup P_3$, and 
        \item the number of copies of $P_2$ in the path is equal to $|P_1 \setminus (P_1\cap P_3)|-1=|P_3 \setminus (P_1\cap P_3)|-1$.
    \end{enumerate} 
\end{lemma}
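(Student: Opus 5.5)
Write the path as $A=D_0-D_1-\cdots-D_{m}-D_{m+1}=C$, where $A$ is the anchor copy of $P_1$, $C$ the anchor copy of $P_3$, each $D_i$ ($1\le i\le m$, $m\ge 2$) is a copy of $P_2$, and consecutive vertices differ by exchanging one element. The plan is to show that a shortest path never "wastes" an exchange. Then, by \cref{thm diameter}, its length equals $|A\setminus(A\cap C)|$, and every intermediate downset lies inside $A\cup C$.

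The first step is to show that the path $\mathcal P$ is a geodesic between $A$ and $C$, i.e. $m+1=d(A,C)=|A\setminus(A\cap C)|$. Suppose instead $m+1>|A\setminus(A\cap C)|$. Then some exchange along the path is undone or redundant: some element is removed and later re-added, or added and later removed. I would use this to build a shorter path of the same form $P_1-P_2-\cdots-P_2-P_3$, contradicting minimality. The natural tool is the exchange argument in the proof of \cref{thm diameter}. Between any two vertices $D_i,D_j$ there is a geodesic obtained by repeatedly removing a maximal element of the difference and adding a minimal element of the other difference, and every vertex on it lies inside $D_i\cup D_j$.

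The delicate point, and what I expect to be the main obstacle, is that replacing a subpath by a geodesic may introduce intermediate downsets not isomorphic to $P_2$. In that case I would argue such a vertex yields a shorter path of the required type anyway. If a new vertex $X$ is isomorphic to neither $P_1$ nor $P_2$, then the path from the $P_1$–$P_2$ pair to $X$ is shorter and ends at a third poset, contradicting the choice of $\mathcal P$. If $X$ is a copy of $P_1$, then a $P_1$–$P_2$ adjacency appears closer to $C$. So any shortcut produces a strictly shorter admissible path, and $\mathcal P$ must be a geodesic.

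Here is where $m\ge 2$ should enter (cf.\ \cref{rem m<3 is special}). With a single copy of $P_2$, the path $A-D_1-C$ might have length $2$ while $A$ and $C$ are adjacent, and then triangle configurations like \cref{triangles in exchange graph} are not excluded by the argument. With at least two copies, I would check that the shortening argument still applies.

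Once $\mathcal P$ is a geodesic, each element of $A\setminus C$ is removed exactly once and each element of $C\setminus A$ is added exactly once, with no other element ever touched. Hence every $D_i\subseteq A\cup C$, giving $R=A\cup C=P_1\cup P_3$, which is (1). Counting vertices, the number of copies of $P_2$ is the path length minus one, namely $m=|A\setminus(A\cap C)|-1$. Since $|A|=|C|=n$, this equals $|C\setminus(A\cap C)|-1$, which is (2).
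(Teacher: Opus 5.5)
Your outer structure matches the paper's: show that $\mathcal P$ is a geodesic from $A$ to $C$, then read off (1) and (2). Your last paragraph does that second part correctly. The gap is in the geodesic step, which is the whole content of the lemma.

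A priori, non-geodesicity of $\mathcal P$ need not show up on a subpath strictly between two copies of $P_2$. For example, the step $A-D_1$ may remove an element of $A\cap C$ that the step $D_m-C$ puts back. So you must shortcut starting at $A$ or ending at $C$, and ultimately compare $\mathcal P$ with a geodesic $\mathcal P'$ from $A$ to $C$ of length $d\le m$.

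Such a $\mathcal P'$ may consist only of copies of $P_1$ and $P_3$, and $A$ and $C$ may even be adjacent. There your case analysis fails in three ways:
\begin{enumerate}
\item A copy of $P_1$ on $\mathcal P'$ need not be adjacent to any copy of $P_2$; it may sit next to $C$. So ``a $P_1$--$P_2$ adjacency appears closer to $C$'' is unjustified.
\item Running from the pair $D_1-A$ along $\mathcal P'$ to its first copy of $P_3$ gives a path of length $1+d$. When that copy is $C$ itself and $d=m$, this is no shorter than $\mathcal P$.
\item If $d=1$ there are no new vertices at all, so your case analysis says nothing.
\end{enumerate}
You explicitly defer this point (``I would check that the shortening argument still applies''), but it is exactly where the work lies.

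The missing idea is that the contradicting special-form path may have $P_1$ or $P_3$, not $P_2$, as its repeated middle poset, with a copy of $P_2$ from $\mathcal P$ as an endpoint. Here is how the cases go:
\begin{enumerate}
\item If $\mathcal P'$ meets a third isomorphism type, it contains a special-form subpath of length at most $d<m+1$, contradicting minimality.
\item Otherwise $\mathcal P'$ uses only copies of $P_1$ and $P_3$. If $d=1$, then $D_1-A-C$ is a path $P_2-P_1-P_3$ of length $2<m+1$. This is precisely where the hypothesis $m\ge 2$ is needed.
\item If $d\ge 2$, let $\mathcal P'_1$ run from $A$ to the first copy of $P_3$ on $\mathcal P'$, and let $\mathcal P'_3$ run from $C$ back to the first copy of $P_1$. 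At least one of them is strictly shorter than $\mathcal P'$: if the first copy of $P_3$ is $C$, then all internal vertices are copies of $P_1$ and $\mathcal P'_3$ has length $1$. Hence either $D_1$ followed by $\mathcal P'_1$ (a path $P_2-P_1-\cdots-P_1-P_3$) or $D_m$ followed by $\mathcal P'_3$ (a path $P_2-P_3-\cdots-P_3-P_1$) has length at most $d<m+1$, again contradicting minimality.
\end{enumerate}
With this case supplied, $\mathcal P$ is a geodesic and your conclusion goes through as written.
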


\begin{proof}
    Let $X$ be the intersection of the anchor copies of $P_1$ and $P_3$.  Unless otherwise specified when we refer to $P_1$ and $P_3$ below we will specifically mean the anchor copies. 
    
    By \cref{thm diameter}, there exists a path $\mathcal{P}'$ in $G_n(Q)$ from the anchor copy of $P_1$ to the anchor copy of $P_3$ that is built by successively removing one element of $P_1\setminus X$ and adding one element of $P_3\setminus X$. The $\mathcal{P}$ can be no shorter than $\mathcal{P}'$ since every element of $P_1\setminus X$ must be removed and every element of $P_3\setminus X$ must be added. 
    
    Suppose $\mathcal{P}'$ were strictly shorter than $\mathcal{P}$.  By the minimality of $\mathcal{P}$ this implies that $\mathcal{P}'$ is not of the form discussed above, nor is any subpath of it of that form for any three distinct $P_i$s.  The only way this can happen is if $\mathcal{P}'$ contains only copies of $P_1$ and $P_3$.  We now need to consider two cases.  If $\mathcal{P}'$ is simply $P_1-P_3$, that is the anchor copies are adjacent, then taking $\mathcal{P}'$ along with the first copy of $P_2$ in $\mathcal{P}$ gives a path of the form $P_2-P_1-P_3$ which by the assumption that there are at least two copies of $P_2$ in $\mathcal{P}$ contradicts the minimality of $\mathcal{P}$.  Now suppose $\mathcal{P}'$ has at least one internal node.  In this case either the portion of $\mathcal{P}'$ from the anchor copy of $P_1$ to the first appearance of a copy of $P_3$ in $\mathcal{P}'$ or the portion of $\mathcal{P}'$ from the anchor copy of $P_3$ to the first appearance of a copy of $P_1$ in $\mathcal{P}'$ is strictly shorter than $\mathcal{P}'$ itself (potentially both are strictly shorter).  Call these portions $\mathcal{P}'_1$ and $\mathcal{P}'_3$, then at least one of $\mathcal{P}'_1$ along with the first $P_2$ in $\mathcal{P}$ or $\mathcal{P}'_3$ along with the last $P_2$ in $\mathcal{P}$ gives a strictly shorter path than $\mathcal{P}$ of the special form, thus contradicting the minimality of $\mathcal{P}$.  
    
    In all cases we have contradicted that $\mathcal{P}'$ is strictly shorter than $\mathcal{P}$ so they must have the same length. Therefore $\mathcal{P}$ must itself be formed by successively removing one element of $P_1\setminus X$ and adding one element of $P_3\setminus X$. Hence, for every $P_2$ in the path, each of its elements is contained in one of the anchor copies, hence $R=P_1\cup P_2$. Furthermore, each copy of $P_2$ in the path corresponds to swapping one element of $P_1$ with an element of $P_3$, hence the number of copies of $P_2$ in $\mathcal{P}$ is $|P_1\setminus X|-1$.
    
\end{proof}

Finally, theorem \ref{lemma: improved bound general} is a corollary.

\begin{proof}[Proof of theorem \ref{lemma: improved bound general}]
    Given a witness $Q$, take a path $P_1-P_2-\cdots -P_2-P_3$ as discussed immediately before \cref{lem A cup B general}.  
     Let $Q'$ be a downset of $Q$ given by the union of this path along with one copy of each $P_j$ for $j>3$.  Then $Q'$ is also a witness for $\Gamma_n$.
    
    If there are not at least two copies of $P_2$ in the path then $|Q'| \leq n+2 + (k-3)n = n(k-2)+ 2 < n(k-1)$  since $k\geq 3$ implies $n>2$.    If there are at least two copies of $P_2$ in the path, by \cref{lem A cup B general}, $|Q'| \leq 2n + (k-3)n = n(k-1)$.
    
    If $Q$ is minimal then $Q=Q'$ and the result follows.
\end{proof}

\begin{remark}\label{rem m<3 is special}
    In the previous two proofs we saw that the situation where there was only one $P_2$ in the path was special.  We can see why this is in the example of \cref{triangles in exchange graph}.  The path from the poset on $a,c,e$ to the poset on $a,c,d$ to the poset on $a,c,b$ is a minimal path of the form discussed immediately before \cref{lem A cup B general}, however, the union of the anchor copies is not the union of the path because it does not include $d$.  The problem here is that the anchor copies are adjacent.  The only way this can happen without contradicting minimality is if there is a triangle in the exchange graph where the three vertices of the triangle are non-isomorphic downsets --- for any longer $P_1-P_2-\cdots-P_2-P_3$ path, if $P_1$ and $P_3$ were adjacent then taking the anchor copies along with the first $P_2$ in the path would give $P_3-P_1-P_2$ of the desired form but strictly shorter.

    Since our overall goal is to show there must be small witnesses, configurations with many non-isomorphic downsets close together in the exchange graph are helpful rather than problematic, but the slightly different behaviour when the minimal path has only one $P_2$ does sometimes require special consideration.
\end{remark}

\section{The case $k=3$}\label{subsec k=3}

We now turn to the special case $k=3$. Here, instead of bounding the size of \emph{all} minimal witnesses from above, we give an upper bound that is respected by \emph{at least one} minimal witness for each node. To keep the notation lighter in subscripts, we will denote our nodes by $\Gamma_n=\{A, B, C\}$. Our main result is the following.

\begin{theorem}\label{thm k=3}
     If $\Gamma_n=\{A,B,C\}$ has a witness then it has at least one minimal witness $Q$ with $|Q|\leq\frac{3}{2}(n+1)$.
\end{theorem}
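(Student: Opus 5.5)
We will work with a witness $Q$ chosen to be small, not just minimal, and run the machinery of \cref{subsec exchange graph} on it. First pass to a minimal witness. Among all paths of the form $A-B-\cdots-B-C$ in $G_n(Q)$ (up to relabelling of $A,B,C$), fix a shortest one, $\mathcal{P}$, as in the setup before \cref{lem A cup B general}. The union $R$ of $\mathcal{P}$ is a downset of $Q$ containing copies of $A$, $B$ and $C$. Since $k=3$ there are no further posets to adjoin, so $R$ is itself a witness. We may therefore replace $Q$ by a minimal witness contained in $R$, and it suffices to show that for a suitable choice of $Q$ we get $|R|\le \frac32(n+1)$.

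There are two cases. If $\mathcal{P}$ contains exactly one copy of $B$, then $|R|\le n+2\le \frac32(n+1)$ as soon as $n\ge 1$, and we are done. Otherwise \cref{lem A cup B general} gives $R=A\cup C$ for the anchor copies and $|R|=n+m$, where $m=|A\setminus(A\cap C)|$. The path has $m-1$ copies of $B$, obtained by exchanging elements of $A\setminus X$ for elements of $C\setminus X$ one at a time, with $X=A\cap C$. The task is thus to show that some witness admits such a configuration with $m\le \frac{n+3}{2}$. Equivalently, the anchor copies must overlap in at least roughly half their elements.

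The key step is the case $m>\frac{n+3}{2}$, where we look for a shortcut. Inside $R=X\sqcup (A\setminus X)\sqcup (C\setminus X)$ we will use the freedom of \cref{thm diameter}: the order in which elements of $A\setminus X$ are removed and elements of $C\setminus X$ are added can be varied, provided we always remove a maximal and add a minimal element. Every such reordering yields a geodesic from $A$ to $C$ inside $R$. By minimality of $\mathcal{P}$, every intermediate vertex of every such geodesic is isomorphic to $B$, since an $A$ or $C$ in the interior would give a shorter path of the special form. In particular the downsets $X\cup S\cup T$, with $S\subseteq A\setminus X$ and $T\subseteq C\setminus X$ downsets relative to $X$ and $|S|+|T|=m$ with both $S$ and $T$ nonempty, are all copies of $B$. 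Compare the two extreme geodesics, one removing $A\setminus X$ in a fixed linear extension order and the other in the reverse-compatible order. This exhibits $B$ as being simultaneously ``$X$ plus $j$ elements of $A$'s top and $m-j$ of $C$'s'' for all $1\le j\le m-1$. From this isomorphism rigidity we aim to produce a new downset $Q'$, assembled from $X$ together with the portion of $A\setminus X$ and of $C\setminus X$ lying within distance about $m/2$ of the middle copy of $B$. In $Q'$ the copies of $A$ and $C$ should meet in a larger common part, so that $Q'$ has size at most $\frac32(n+1)$ and its downsets of size $n$ are still only $A,B,C$. The latter should hold because every $n$-downset of $Q'$ is also an $n$-downset of some copy of $R$-type structure, or is isomorphic to one appearing on a geodesic.

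The main obstacle is this last step: realising the ``halving'' construction, i.e.\ showing that when $m$ is large the symmetry forced on $B$ lets us find an alternative witness, not necessarily a downset of the original $Q$, with anchors overlapping more, while introducing no spurious downsets. My first attempt will be to identify $A\setminus X$ and $C\setminus X$ through the isomorphisms of the middle $B$. I will then glue a copy of $A$ and a copy of $C$ along $X$ together with half of the exchanged part, and check downsets via the exchange-graph geodesic description. If that fails, I will instead argue by induction on $m$, showing that a minimal counterexample with $m>\frac{n+3}{2}$ yields a witness with smaller $m$.
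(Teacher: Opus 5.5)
You reduce the problem correctly. In the one-$B$ case $|R|\le n+2\le\frac32(n+1)$. Otherwise $|R|=n+m$, so the theorem amounts to showing $|X|\ge m-3$, i.e.\ $m\le\frac{n+3}{2}$. Your observation that the mixed downsets $X\cup S\cup T$ are copies of $B$ is a special case of \cref{lem all B}; the paper needs the general form, which also covers downsets omitting elements of $X$.

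However, the proposal stops exactly where the proof has to start, and the gap is genuine. The \emph{halving} step is a hope, not an argument. You never specify which elements of $A\setminus X$ and $C\setminus X$ are identified. You never show that $A$ and $C$ even share a common downset larger than $X$ that is compatible with such a gluing. Your claim that every $n$-downset of $Q'$ is an $n$-downset of an \emph{$R$-type structure} or lies on a geodesic is unsupported: in a newly glued poset, the mixed downsets are precisely where spurious isomorphism types arise. Knowing that many downsets are isomorphic to $B$ gives no leverage until you extract concrete invariants from those isomorphisms, and your plan never does. Note also that a general construction of a smaller witness with larger anchor overlap is what the paper explicitly leaves open. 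It only carries out a one-step reduction of $m$, by a single swap, under the strong indistinguishability hypothesis of \cref{lem all covers all}.

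The missing ingredient is to turn the many copies of $B$ into counting constraints via isomorphism invariants. The two invariants used are the multiset of heights and the multiset $M$ recording, for each element below the top height $h$, how many height-$h$ elements cover it. The argument then runs as follows.
\begin{enumerate}
\item Comparing heights along exchanges rules out the partitions in \cref{lem stick condition}. Hence for $m>3$ each of $A\setminus X$ and $C\setminus X$ is an antichain with at most a unique bottom and a unique top (\cref{lem only bits}).
\item Delete one maximal element from each side to get $Y$. Its $i\ge 2m-4$ maximal elements all have height $h$, and each covered $y$ is covered by $j\ge m-2$ of them (\cref{lem max elt structure}).
\item Deleting any $m-2$ maxima of $Y$ gives a copy of $B$. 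If $j\le i-m+2$, the number of covers of $y$ across these copies takes all $m-1$ values from $j-m+2$ to $j$, and all of them are forced into $M$.
\item Hence $Y$ has at least $m-1$ non-maximal elements, at most two of which lie outside $X$, so $|X|\ge m-3$.
\item If $i-m+2<j<i$ (with $j$ minimized), an iterated exchange argument produces $m-1$ distinct elements of minimal cover count instead. Only when $j=i$ is a new, smaller witness needed, and there indistinguishability makes the swap safe.
\end{enumerate}
Your proposal contains no analogue of this counting, and no fully specified construction with a verified downset check to replace it. As it stands, it does not establish the bound.
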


By loosening the generality of the statement from \emph{all} witnesses to \emph{at least one} witness we are able to tighten the bound and gain much in the way of constructing covtree because to ascertain that some $\Gamma_n$ is a node in covtree one need only find one, not all, of its witnesses. In particular, when constructing level $n$ of covtree by brute force (for lack of a current alternative), one sifts through all finite posets up to a certain size, for each constructing its set of downsets of size $n$. Each such set is a node at level $n$, and a set that cannot be obtained in this way is (by definition) not a node. Theorem \ref{lemma: improved bound general} tells us that to construct the nodes at level $n$ with $k=3$ it suffices to consider posets no larger than $2n$, while theorem \ref{thm k=3} says that one can stop earlier at size $\frac{3}{2}(n+1)$. To get an idea of numbers, at $n=10$ the bounds of theorems \ref{lemma: improved bound general} and \ref{thm k=3} are 20 and 16, respectively, meaning 9349 fewer posets to search through---and by $n=15$ this number grows to 1,271,244.

In the remainder of this section, we prove theorem \ref{thm k=3} through a series of lemmas.  The basic structure is as follows.  First we give the general setup which is based on the minimal path as in \cref{lem A cup B general}. We will denote the anchor copies in the path by $A$ and $C$. \Cref{lem all B} gives us more information about this path. In \cref{lem stick condition} and \cref{lem only bits}, we work out some significant constraints on the poset structure of the set difference between the anchor copies. The final step then is to bound the size of $|A\setminus (A\cap C)|$ in term of the size of $|A\cap C|$. \Cref{lem max elt structure} establishes some structure that will be useful for this and then \cref{lem all covers all} along with the proof of the theorem at the end of the subsection completes the plan, establishing these bounds by breaking the problem into two cases.  The case dealt with in \cref{lem all covers all} is the one case where we may need to go outside our initial $Q$, which is why the theorem only tells us that there exists a minimal witness with size under the bound, but does not tell us about all minimal witnesses.  

\

Let $Q$ be a witness of $\Gamma_n=\{A,B,C\}$. Then as in the set up to \cref{lem A cup B general}, there exists a shortest path $\mathcal{P}$ in $G_n(Q)$ of the form $A-B-B-\cdots-B-C$. The copies of $A$ and $C$ at the ends of the path are, again, our anchor copies, and we will use the letters $A$ and $C$ to refer to these copies specifically in what follows unless otherwise specified. We will assume that $Q$ is the union of this path. In particular, this means that $Q$ is the union of the anchor copies: $Q=A\cup C$. 

Let $X=A\cap C$ and let $m=|A\setminus X| = |C\setminus X|$.  The goal is to show that $m$ must be small, following the outline described above, because $|Q|=n+m$ so bounding $m$ from above bounds $|Q|$ from above.   Note that \cref{fig exchange graph} shows that some $\Gamma_n=\{A,B,C\}$ require $m=3$, and so there cannot be a universal upper bound for $m$ that is less than $3$.  Consequently, any time we have some particular situation where $m< 3$ then such a situation is already below any universal bound we could hope for, and so can be considered to be done.  Thus, when convenient, we will assume $m\geq 3$ to avoid the situation discussed in \cref{rem m<3 is special}, and a bit later in the section we will assume $m>3$ so as to simplify the application of a later lemma (\cref{lem only bits}).
We suspect that $3$ is in fact an upper bound for $m$ in the case $k=3$---that is, for any node $\Gamma_n$ with $k=3$ there exists at least one witness $Q$ satisfying $|Q| \leq n + 3$---but we have not been able to prove this.

\begin{lemma}\label{lem all B}
   With assumptions and notation as above, and assuming $m\geq 3$, every downset of size $n$ of $Q$ which does not contain all of $A\setminus X$ nor all of $C\setminus X$ is isomorphic to $B$.
\end{lemma}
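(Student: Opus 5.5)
Since $Q$ is a witness, every downset of size $n$ of $Q$ is isomorphic to one of $A$, $B$, $C$. The plan is to show that a downset $D$ of size $n$ that misses some element of $A\setminus X$ and some element of $C\setminus X$ cannot be isomorphic to $A$ or $C$. We do this by producing a path of the special form $A-B-\cdots-B-C$ that is strictly shorter than $\mathcal{P}$. By \cref{lem A cup B general}, $\mathcal{P}$ has length exactly $m$: it has $m-1$ internal copies of $B$, and when $m\ge 3$ there are at least two of them. So it suffices to exhibit a shorter path whose endpoints are a copy of $A$ and a copy of $C$, and whose interior is all $B$ apart from possibly one more $A$ or $C$ that we can truncate at.

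Suppose, for contradiction, that $D$ is such a downset and is isomorphic to $A$ (the case $C$ is symmetric). Since $Q=A\cup C$, we can write $D=X'\cup A'\cup C'$ with $X'\subseteq X$, $A'\subsetneq A\setminus X$ and $C'\subsetneq C\setminus X$. By \cref{thm diameter}, the distance from $D$ to the anchor $C$ is $|D\setminus C|=|A'|<m$, since $A'$ is a proper subset of $A\setminus X$. Likewise the distance from $D$ to the anchor $A$ is $|D\setminus A|=|C'|<m$. Now take a geodesic from $D$ (a copy of $A$) to the anchor $C$; it has length less than $m$. Along it, find the last vertex isomorphic to $A$ and then the first vertex after it that is not isomorphic to $B$. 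This gives a subpath that either has the special form $A-B-\cdots-B-C$ with length less than $m$, contradicting the minimality of $\mathcal{P}$, or has the form $A-C$ or $A-A$ with no $B$ in between.

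The degenerate subcases need care; this is where $m\ge3$ and \cref{rem m<3 is special} enter. If the geodesic contains no $B$ at all, then an adjacent pair $A-C$ occurs somewhere in $G_n(Q)$. Combining such a pair with a neighbouring $B$ (for example by attaching the first $B$ of $\mathcal{P}$ via the geodesic from $D$ to the anchor $A$) should produce a special path of length at most $2<m$, again a contradiction. I expect this bookkeeping to be the main obstacle: guaranteeing that the truncated path genuinely has the form $A-B\cdots B-C$ with distinct end types, and that the triangle situation cannot occur when $m\ge3$. To handle it, I would use both geodesics, from $D$ to the anchor $C$ and from $D$ to the anchor $A$. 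I would pick whichever one first meets a $C$-type vertex, then take the shortest segment between an $A$-type and a $C$-type vertex that has only $B$'s inside. Its length is bounded by that geodesic's length, which is less than $m$, and the same argument as in the proof of \cref{lem A cup B general} handles the case of an adjacent $A-C$ pair.

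Hence $D\cong B$.
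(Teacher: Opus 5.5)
\textbf{The gap is in the degenerate case, and neither of your patches closes it.} Let $\gamma$ be your geodesic from $D\cong A$ to the anchor $C$. It has length $m_1=|D\cap(A\setminus X)|<m$; write $m_2=|D\cap(C\setminus X)|$.

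If $\gamma$ contains a copy of $B$ you are fine. Indeed the special form allows any middle type, so three consecutive runs of distinct types along $\gamma$ already give a special subpath of length $\le m_1<m$. Your last-$A$/first-non-$B$ truncation would miss a pattern such as $A-C-B$, but this runs argument covers it.

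The real problem is when $\gamma$ contains only copies of $A$ and $C$. Append the last $B$ of $\mathcal{P}$, which is adjacent to the anchor $C$, to the segment of $\gamma$ starting at its last copy of $A$. This gives a contradiction in every situation except one: $m_1=m-1$ and every vertex after $D$ is a copy of $C$. There the path $A-C-\cdots-C-B$ has length exactly $m$ and contradicts nothing.

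Your suggested fixes do not reach this situation:
\begin{itemize}
\item $D$ cannot have a $B$-neighbour, since its successor on $\gamma$ is a copy of $C$ and $B-D-C$ would be special of length $2<m$.
\item Detouring through the geodesic from $D$ to the anchor $A$ and then to the first $B$ of $\mathcal{P}$ gives length $m_2+2$, not $2$.
\item The argument of \cref{lem A cup B general} does not transfer. There both ends of the shorter path are anchors adjacent to copies of $B$ on $\mathcal{P}$, so one can truncate at whichever end gives a strictly shorter portion and stay below $|\mathcal{P}|$ after adding one edge. Here only one end is an anchor.
\end{itemize}
The paper's own proof takes exactly your route and dispatches this case with the same appeal to \cref{lem A cup B general}. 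So you followed it faithfully, but the step needs an extra idea.

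\textbf{The missing idea is to force $\gamma$ through a copy of $B$.}

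First prove the claim for $E\supseteq X$ with $E\neq A,C$. Such an $E$ lies on a geodesic of length $m$ between the anchors, since its distances to them are $|E\cap(C\setminus X)|$ and $|E\cap(A\setminus X)|$, which sum to $m$.
\begin{itemize}
\item If that geodesic contains a $B$ but is not of the form $A-B-\cdots-B-C$, three consecutive runs of distinct types give a proper, hence shorter, special subpath.
\item If it contains only $A$'s and $C$'s, take either the segment from the anchor $A$ to the first copy of $C$ and prepend the first $B$ of $\mathcal{P}$, or the segment from the last copy of $A$ to the anchor $C$ and append the last $B$ of $\mathcal{P}$. One of these is a special path of length $<m$ once $m\ge4$.
\item For $m=3$ the remaining pattern $A,A,C,C$ needs a short local check. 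For example, take the downset obtained from the offending copy of $A$ by exchanging its element of $C\setminus X$ for the element added in the first step of $\mathcal{P}$.
\end{itemize}
This is where the two-anchor argument genuinely applies, so every such $E$ is a copy of $B$.

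Now suppose $D\cong A$ misses $r\ge1$ elements of $X$. Choose $\gamma$ so that its first $r$ steps add those elements while removing maximal elements of the current set lying outside $C$; a minimal missing element of $X$ is always addable. Since $m_1+m_2=m+r$, after $r$ steps $\gamma$ reaches $X\cup A''\cup(D\cap(C\setminus X))$ with $A''\subseteq A\setminus X$ and $|A''|=m-m_2$. Because $1\le m_2\le m-1$, this downset contains $X$ and contains neither all of $A\setminus X$ nor all of $C\setminus X$, so it is a copy of $B$. Thus $\gamma$ has length $<m$ and meets all three types, which yields the shorter special path. The case $D\cong C$ is symmetric.
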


Note that this is not a trivial statement since some downsets of size $n$ of $Q$ could be made of a mix of elements of $A$ and $C$ without being any of the nodes in the path.  The content of the statement is that all these downsets of size $n$ are also isomorphic to $B$.

\begin{proof}
    Let $S\subset Q$ be a downset of size $n$ that does not contain all of $A\setminus X$ nor all of $C\setminus X$. 
    Let $m_1=|S\cap(A\setminus X)|$, $m_2=|S\cap(C\setminus X)|$, and so by hypothesis, $m_1<m$ and $m_2<m$.
    
    Every downset of $Q$ is isomorphic to $A$, $B$, or $C$.  Suppose for a contradiction that $S$ is not isomorphic to $B$.  By removing the elements of $S\cap (A\setminus X)$ from $S$ and adding in the missing elements of the anchor copy of $C$ one by one we obtain a path from $S$ to $C$ of length $m_1<m$, and similarly we obtain a path from $S$ to $A$ of length $m_2<m$. If $S$ is isomorphic to $A$ then this $S$ to $C$ path gives a shorter path from $A$ to $C$ than $\mathcal{P}$ and similarly for the $S$ to $A$ path if $S$ is isomorphic to $C$.  In either case we get a shorter $A$ to $C$ path than $\mathcal{P}$.  Since $m\geq 3$ there are at least two copies of $B$ in $\mathcal{P}$ so as in the proof of \cref{lem A cup B general}, there is a shorter path of the special form, contradicting the minimality of $\mathcal{P}$. 
\end{proof}

Note that the multiset of heights of all elements of a poset is an isomorphism invariant of the poset.

\begin{lemma}\label{lem stick condition}
    It is not possible for $A\setminus X$ to be partitioned as $A\setminus X = \{a', a\}\cup A_1\cup A_2$ where $a'$ covers $a$, all elements of $A_1$ are above or incomparable to $a'$, and all elements of $A_2$ are below or incomparable to $a$, with $A_1$ and $A_2$ both nonempty.  It is also not possible for $C\setminus X$ to be likewise partitioned.
\end{lemma}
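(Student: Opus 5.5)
The plan is to argue by contradiction using \cref{lem all B}. We suppose $A\setminus X=\{a',a\}\cup A_1\cup A_2$ as in the statement, and we build several downsets of $Q=A\cup C$ of size $n$. Each of them will omit at least one element of $A\setminus X$ and at least one element of $C\setminus X$. By \cref{lem all B} all of them must then be isomorphic to $B$. We will then compare an isomorphism invariant across them, namely the multiset of heights (or its sum, or the number of elements of each height), and show that these invariants cannot all agree. By symmetry of the roles of $A$ and $C$, the argument for $C\setminus X$ is identical, so it suffices to treat $A\setminus X$.

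The downsets will be obtained from the anchor copy $A$ by peeling off elements of $A\setminus X$ from the top and replacing each with an element of $C\setminus X$, chosen minimal among those not yet added; this is exactly as in the proof of \cref{thm diameter}. First we check which peelings are legal. Since $a'$ covers $a$ and elements of $A_2$ are below or incomparable to $a$, no element of $A_2$ lies above $a'$. Hence the upset of $a'$ inside $A\setminus X$ is contained in $\{a'\}\cup A_1$. Using this, I would construct three removals from $A$:
\begin{enumerate}
\item remove an upset $U\subseteq A_1\cup\{a'\}$ containing $a'$;
\item remove $U\cup\{a\}$ together with whatever of $A_1\cup A_2$ lies above $a$;
\item remove only a nonempty upset contained in $A_1$, keeping $a'$.
\end{enumerate}
Each removal is refilled with the same initial segment of a fixed linear extension of $C\setminus X$, so that all resulting sets have size $n$. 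Nonemptiness of $A_1$ and $A_2$ is what guarantees two things: we never remove all of $A\setminus X$ (an element of $A_2$ below or incomparable to $a$ can always be kept), and we never add all of $C\setminus X$ (we add fewer than $m$ elements). Thus \cref{lem all B} applies to every such downset.

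Next we compare consecutive members of this family. Two downsets that differ by exchanging a single element $x\in A\setminus X$ for an element $c\in C\setminus X$ have equal height multisets only if $x$ and $c$ have the same height and the removal of $x$ does not change the heights of other elements. Exchanging $a'$ (of height $h(a)+1$) and then $a$ (of height $h(a)$) against the same added elements of $C\setminus X$ forces relations among the heights of the added elements. Running the same exchange in the opposite order of additions, or peeling off $A_1$ first versus $A_2$ first, forces the incompatible relation, which gives the contradiction.

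The main obstacle is this last step. The height bookkeeping is delicate, because adding elements of $C\setminus X$ may create chains through $X$, and removing $a$ can lower the heights of elements of $A_1$ above it. My first attempt would be to use only the count of elements at height $h(a)+1$ versus $h(a)$ in carefully chosen pairs. If that fails, I would instead count maximal elements, or use the number of covering pairs, as the invariant.
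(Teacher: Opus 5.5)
Your setup matches the first half of the paper's argument, but the step that produces the contradiction is missing. Take your removals to be exactly $A_1$, $A_1\cup\{a'\}$ and $A_1\cup\{a,a'\}$; with an arbitrary up-set $U$, your sets (1) and (2) need not differ by a single exchange. This gives downsets $B'$, $B''=(B'\setminus\{a'\})\cup\{c\}$ and $B'''=(B''\setminus\{a\})\cup\{c'\}$ with $c,c'\in C\setminus X$. By \cref{lem all B} all three are copies of $B$, so $h(c)=h(a')>h(a)=h(c')$. You then assert that doing the additions ``in the opposite order'' forces an incompatible relation. You give no reason the reversed configuration exists, and nothing in your framework (refilling along a fixed linear extension of $C\setminus X$) guarantees it: $(B'\setminus\{a'\})\cup\{c'\}$ is a downset only if $c'$ is not above $c$. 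The missing idea is that the inequality you have just derived is exactly what licenses the swap. Since $h(c)>h(c')$ rules out $c'>c$, $B''''=(B'\setminus\{a'\})\cup\{c'\}$ is a downset. It omits $a'$ and contains only $|A_1|+1\le m-2$ elements of $C\setminus X$, so it is again a copy of $B$. Comparing it with $B'$ gives $h(c')=h(a')$, and comparing it with $B'''$ gives $h(c)=h(a)$. Hence $h(c')>h(c)$, a contradiction; this is how the paper finishes. Your other option, peeling $A_2$ before $a$, is not available in general, since elements of $A_2$ below $a$ cannot be removed while $a$ remains.

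The obstacle you flag is also illusory, and seeing this is what makes the argument short. Every chain ending at $x$ lies in the downset generated by $x$, so an element of any downset of $Q$ has the same height there as in $Q$. Hence adding or removing an element never changes the heights of the others. Two downsets differing by an exchange $x\leftrightarrow c$ therefore have equal height multisets if and only if $h(x)=h(c)$, and no fallback invariant is needed. Note too that $a'$ covering $a$ gives only $h(a')>h(a)$, not $h(a')=h(a)+1$, though the inequality is all that is used. Finally, for the three removals above to be up-sets you need $A_1$ to be upward closed in $A\setminus X$. The hypotheses do not literally guarantee this: an element of $A_1$ incomparable to $a$ and $a'$ may lie below an element of $A_2$. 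The partition can always be re-chosen so that it holds, e.g.\ $A_1=\{x: x>a\}\setminus\{a'\}$ when this is nonempty, and otherwise $A_1=\{z\}$ for a maximal element $z$ of $A\setminus X$ incomparable to $a$ and $a'$.
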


Note that the partition in lemma \ref{lem stick condition} is generic and not unique (since elements incomparable to both $a$ and $a'$ can be contained in either $A_1$ or $A_2$). Indeed, most posets will allow for at least one such partition. Therefore, the statement that our posets of interest do not allow for any such partition will pose a powerful constraint on their structure.

\begin{proof}
    If $m\leq 3$ then no partition as described in the statement is possible, so the lemma holds.  Assume then that $m>3$.
    
    Suppose $A\setminus X$ can be partitioned as described in the statement. Take $X\cup A_2\cup \{a, a'\}$ and add elements of $C\setminus X$ so as to obtain a downset $B'$ of size $n$ in $Q$.  Since $A_1$ is nonempty, by \cref{lem all B}, $B'$ is a copy of $B$.  Remove $a'$ and add an additional element of $C\setminus X$, call it $c$, to obtain $B''$ which is likewise a copy of $B$.  Finally remove $a$ and add a further element of $C\setminus X$, call it $c'$, to obtain $B'''$.  Since $A_2$ is nonempty, we also have that \cref{lem all B} implies that $B'''$ is a copy of $B$. 

    Since the multiset of heights is an isomorphism invariant, comparing $B'$ and $B''$, we have that $h(a') = h(c)$ and comparing $B''$ and $B'''$, we have that $h(a) = h(c')$.  We also have $h(a') > h(a)$ since $a'$ covers $a$, so $h(c)> h(c')$.  In particular this means that $c'$ is not above $c$ in the poset.  Hence $B'''' = B''\cup \{c'\} \setminus \{c\}$ is also a downset of $Q$ and by the same reasoning as before is also a copy of $B$.  Comparing the heights of elements in $B'$ and $B''''$ gives us that $h(a') = h(c')$ and between $B'''$ and $B''''$ gives $h(a) = h(c)$ hence $h(c')>h(c)$ which is a contradition.

    The result for $C\setminus X$ follows by switching the roles of $A$ and $C$.
\end{proof}

\begin{lemma}\label{lem only bits}
    Only the following posets are possible for $A\setminus X$ and $C\setminus X$: an antichain of any size along with potentially a single element beneath all elements of the antichain and potentially a single element above all elements of the antichain (potentially both) or the three element poset with a chain of two elements and one incomparable element.
\end{lemma}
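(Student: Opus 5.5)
We will not use \cref{lem all B} or the height argument again. Everything will come from \cref{lem stick condition}, which we turn into a purely order-theoretic condition on the poset $P:=A\setminus X$ (the case $C\setminus X$ is identical). First we handle small sizes. If $m\leq 3$, we simply list all posets on at most three elements. Those on one or two elements, and the three-element antichain, chain, $\vee$ and $\wedge$, all have the form "antichain, possibly with a single element below all of it and/or a single element above all of it". The only three-element poset left over is the chain of two elements together with an incomparable element, which is exactly the exception in the statement. So from now on we assume $m\geq 4$.

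The key reformulation is as follows. Fix a cover $a\lessdot a'$ in $P$ and look at any other element $x$. It is allowed in $A_1$ iff $x\not<a'$, and allowed in $A_2$ iff $x\not>a$. It cannot be forbidden from both, since $a<x<a'$ would contradict that $a'$ covers $a$. Every element with $x>a$ is forced into $A_1$, every element with $x<a'$ is forced into $A_2$, and the remaining elements are free. Hence, when $|P|\geq 4$, a partition as in \cref{lem stick condition} exists unless every other element is forced to one and the same side. So \cref{lem stick condition} gives the following condition $(\ast)$: for every cover $a\lessdot a'$ in $P$, either every element of $P\setminus\{a,a'\}$ is above $a$, or every element of $P\setminus\{a,a'\}$ is below $a'$. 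In particular, in the first case $a$ is the unique minimum of $P$, and in the second case $a'$ is the unique maximum.

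Next we classify the posets with $(\ast)$ by height.
\begin{itemize}
\item \emph{Height at least $4$.} Take a saturated chain $x_1\lessdot x_2\lessdot x_3\lessdot x_4$. For the cover $x_2\lessdot x_3$, the element $x_1$ is not above $x_2$ and $x_4$ is not below $x_3$, so $(\ast)$ fails. Hence $P$ has height at most $3$.
\item \emph{Height $3$.} Take a saturated chain $x_1\lessdot x_2\lessdot x_3$. For the cover $x_1\lessdot x_2$, the element $x_3$ is not below $x_2$, so all other elements lie above $x_1$ and $x_1$ is the unique minimum. Symmetrically, using the cover $x_2\lessdot x_3$, the element $x_3$ is the unique maximum. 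The remaining elements all have height $2$, so they form an antichain, and we get bottom $+$ antichain $+$ top.
\item \emph{Height $2$.} Pick any cover $a\lessdot a'$. By $(\ast)$, either $a$ is the unique minimum, in which case all other elements are maximal (height $2$) and we get bottom $+$ antichain, or dually $a'$ is the unique maximum and we get antichain $+$ top.
\item \emph{Height $1$.} $P$ is an antichain.
\end{itemize}

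The main point needing care is the reformulation to $(\ast)$: in particular, checking that no element can be excluded from both $A_1$ and $A_2$, and that $m\geq 4$ really is needed for the partition to exist. After that the case analysis above is routine.
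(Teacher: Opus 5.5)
Your proof is correct and is essentially the paper's argument. Both use \cref{lem stick condition} to show that, when $m\geq 4$, for every cover relation ($a'$ covering $a$) in $A\setminus X$ either $a$ is the unique minimum or $a'$ is the unique maximum, then deduce height at most $3$ and read off the listed shapes; your forced/free reformulation and saturated-chain case analysis make explicit the steps the paper only sketches.
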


\begin{proof}
    We seek to avoid the configurations of \cref{lem stick condition}. Note that all posets with at most three elements are permissible.
    
    Consider a poset with 4 or more elements. If there are no cover relations at all, then the poset is an antichain.  Suppose there is some $a'$ covering $a$.  By \cref{lem stick condition} we know that there can be no partition of $A\setminus X\setminus \{a,a'\}$ into $A_1$ and $A_2$, both non-empty, with all elements of $A_1$ above or incomparable to $a'$, and all elements of $A_2$ below or incomparable to $a$. That is, if any element is above $a'$ then no element can be below or incomparable to $a$, and if any element is below $a$ then no element can be above or incomparable to $a'$, and if any element is incomparable to $a$ or $a'$, then no second element can be incomparable to the other one of them.  Since there are at least two elements other than $a$ and $a'$ in $A\setminus X$, this implies that either $a'$ is a unique maximal element or $a$ is a unique minimal element. This is true for all cover relations in the poset, so the poset has height at most 3.  Furthermore, if it has height 3 then it has both a unique maximal and a unique minimal element, while if it has height 2 then it has either a unique maximal or a unique minimal element.  This gives the posets described in the first part of the statement.
\end{proof}

As discussed above, we know that a universal upper bound for $m$ cannot be less than $3$, and so any specific situation with $m\leq 3$ is already covered by any universal bound and hence does not need further consideration.  Consequently, for the remainder of this section we will assume $m>3$ and hence only need to concern ourselves with the first possibility in the statement of \cref{lem only bits}.

\begin{lemma}\label{lem max elt structure}
    With assumptions as above, consider the poset obtained by removing one maximal element of $A\setminus X$ and one maximal element of $C\setminus X$ from $Q$.  Then,
    \begin{enumerate}
        \item all maximal elements of this poset are of the same height, and
        \item every element of this poset that is covered by at least one of the maximal elements is covered by at least $m-2$ of the maximal elements.
    \end{enumerate}
\end{lemma}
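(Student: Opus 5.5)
Let $a_0$ be the removed maximal element of $A\setminus X$ and $c_0$ the removed maximal element of $C\setminus X$, and write $Q'=Q\setminus\{a_0,c_0\}$. The plan is to exploit that $Q'$ has size $n+m-2$, so that downsets of $Q'$ of size $n$ are obtained by deleting $m-2$ maximal elements of $Q'$. Since $m>3$, by \cref{lem only bits} each of $A\setminus X$ and $C\setminus X$ is an antichain with possibly one element below all of it and possibly one element above all of it. Every maximal element of $Q'$ lies in $A\setminus X$ or $C\setminus X$, or is an element of $X$ not below any element of $(A\cup C)\setminus X$ other than $a_0,c_0$. The key observation is that any downset $S$ of $Q'$ of size $n$ misses $a_0$ and $c_0$. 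Hence $S$ contains neither all of $A\setminus X$ nor all of $C\setminus X$, so by \cref{lem all B} every such $S$ is a copy of $B$.

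For (1), I take two maximal elements $u,v$ of $Q'$ and construct two downsets $S_u, S_v$ of $Q'$ of size $n$ with $S_u\setminus\{u\}=S_v\setminus\{v\}$. Concretely, I remove a common set $T$ of $m-3$ maximal elements of $Q'$ not containing $u,v$, and then remove $u$ or $v$ respectively. Both $S_u$ and $S_v$ are copies of $B$, so their multisets of heights agree, and they differ only in containing $v$ versus $u$. To conclude $h(u)=h(v)$ I need that the heights of the other elements are unaffected, which holds because a maximal element's removal does not change heights of the remaining elements. Comparing multisets then gives $h_{Q'}(u)=h_{Q'}(v)$.

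The obstacle is the existence of such a $T$, i.e. that $Q'$ has enough maximal elements, at least $m-1$. If $Q'$ has fewer than $m-2$ maximal elements, a downset of size $n$ requires peeling deeper; I would handle this by choosing instead a linear extension, removing elements top-down, and always arranging that $u$ and $v$ are the last two candidates removed. More generally, the argument goes through with any downset $Y$ of size $n-1$ avoiding $u,v$ for which $Y\cup\{u\}$ and $Y\cup\{v\}$ are downsets. Such a $Y$ exists whenever $|Q'|-2\ge n-1$, i.e. $m\ge 3$: take $Q'\setminus\{u,v\}$, which is a downset, and trim maximal elements other than the predecessors needed. Care is needed that trimming never forces removal of something below $u$ or $v$; I would remove elements not in the down-closure of $\{u,v\}$ first, using that this down-closure has size at most about $n-1$ by the antichain structure.

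For (2), let $x$ be covered by some maximal element $u$ of $Q'$. I would use the same swap comparison but track the number of elements covering each element, or more simply the multiset of pairs (height, number of maximal covers), which is likewise isomorphism invariant among copies of $B$. Removing different sets of $m-2$ maximal elements gives copies of $B$; if $x$ were covered by $j<m-2$ maximal elements, choose the removed set to contain all $j$ covers, making $x$ maximal, versus a choice keeping one of them. The first downset then has a maximal element of height $h(x)<h(u)$, which contradicts (1) applied within $B$, since all copies of $B$ arising this way must have all maximal elements at one height. I expect making this last comparison rigorous, which means showing the ``all maximal elements same height'' property transfers to every such copy of $B$, to be the main step; (1) applied to $Q'$ restricted appropriately should supply it.
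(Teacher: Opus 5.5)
Your part (1) is correct and a bit cleaner than the paper's. The paper takes $h$ to be the top height and, assuming some maximal element lies lower, exhibits two copies of $B$ with different numbers of height-$h$ elements. Your one-element swap over a common deleted set $T$ gives $h(u)=h(v)$ directly. The obstacle you raise does not actually arise, so you can drop the linear-extension fallback. By \cref{lem only bits}, at least $m-2$ elements of $A\setminus X$ remain after deleting $a_0$, and they are maximal in $Q'$: none lies below an element of $C$, since $C$ is a downset not containing it. The same holds for $C\setminus X$. So $Q'$ has at least $2m-4\ge m-1$ maximal elements, exactly as the paper notes.

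Part (2), however, has a genuine gap. Your contradiction rests on the claim that every copy of $B$ obtained by deleting $m-2$ maximal elements of $Q'$ has all its maximal elements at one height. This does not follow from (1). Part (1) concerns the maximal elements of $Q'$, and its proof uses that all size-$n$ downsets of $Q'$ are copies of $B$. That has no analogue inside a single copy of $B$, whose only size-$n$ downset is itself. Your claim is also stronger than the lemma. Take an element covered by exactly $m-2$ maximal elements, which (2) permits and which the proof of \cref{thm k=3} explicitly allows when $j=m-2$. Deleting precisely those covers makes it a maximal element of height less than $h$ in a copy of $B$. So merely exposing $x$ as a maximal element is not contradictory.

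The missing idea is to spend the slack in the deletion budget on $x$ itself. If $x$ has $j\le m-3$ maximal covers, delete them, then $x$, then $m-3-j$ further maximal elements. This is still $m-2$ deletions, so the result is a size-$n$ downset of $Q'$ and hence a copy of $B$. It retains $i-m+3$ elements of height $h$, where $i$ is the number of maximal elements of $Q'$, all of height $h$ by (1). Deleting any $m-2$ maximal elements instead retains only $i-m+2$. Since the multiset of heights is an isomorphism invariant, this is a contradiction, and it is exactly the paper's argument. Note that both your step of ``making $x$ maximal'' and the paper's deletion of $x$ tacitly require every element above $x$ to be one of its maximal covers.
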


\begin{proof}
    Let the poset in the statement be $Y$.  Note that $Y$ has size $n+m-2$ and that $Y$ has at least $2m-4$ maximal elements from the remaining elements of $A\setminus X$ and $C\setminus X$ (since $A\setminus X$ and $C\setminus X$ must be configured as in the first part of \cref{lem only bits}).
    
    Every downset of size $n$ of $Y$ is a copy of $B$ by \cref{lem all B}, so in particular removing any $m-2$ maximal elements of $Y$ gives a copy of $B$.

    Let $h$ be the largest height of any element of $Y$; all elements of that height are necessarily maximal. To prove part (1) of the claim, suppose for contradiction that $Y$ has $i_1$ maximal elements of height $h$ and $i_2>0$ maximal elements of heights less than $h$.
    
    Then, by taking $Y$ and removing $\min\{i_1, m-2\}$ maximal elements of height $h$ and $m-2-\min\{i_1, m-2\}$ maximal elements of other heights, we obtain a copy of $B$ with $i_1 - \min\{i_1, m-2\}$ elements of height $h$. Taking $Y$ again and removing $\min\{i_1, m-2\}-1$ maximal elements of height $h$ and $m-1-\min\{i_1, m-2\}$ maximal elements of other heights we obtain a copy of $B$ with $i_1 +1 - \min\{i_1, m-2\}$ elements of height $h$, which is a contradiction. This proves part (1).

    Now consider an element $b$ in $Y$ that is covered by at least one of the maximal elements and suppose for a contradiction that it is covered by $m-3$ or fewer elements.  Then removing $m-3$ maximal elements, including all of those covering $b$, and removing $b$ itself gives a copy of $B$, as does removing $m-2$ maximal elements.  However, $b$ has height strictly less than $h$, so the multiset of heights differs between these copies of $B$ giving a contradiction and hence proving part (2).
\end{proof}

The next lemma is the final step before the proof to our theorem. In the theorem proof we will bound $m$ from below, and this lemma provides the result in one of the special cases that we will need to consider. This lemma is the first and only place in this work where we need to consider a witness that is smallest in size, not just minimal in the poset of witnesses. To that end, we additionally assume for the rest of the subsection that the $A-B-\cdots-B-C$ path we are working with is not just the shortest path in $G_n(Q)$ for some initial $Q$, but is the shortest path among such paths in the exchange graph of \emph{any} witness. Since the length of the path is $m-1$ and the size of the associated witness is $|Q|=n+m$, this choice of path guarantees that our $Q$ is the smallest witness to our node. 

\begin{lemma}\label{lem all covers all}
With assumptions as above,
    let $h$ be the height of the maximal elements of the subposet of $Q$ described in \cref{lem max elt structure}. Suppose that every element in $Q$ that is covered by at least one element of height $h$ is covered by all of the elements of height $h$.  Then $m\leq \frac{1}{2}(n+1)$.
\end{lemma}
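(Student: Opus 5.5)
The plan is to argue by contradiction: assuming $m>\frac12(n+1)$, we build a witness strictly smaller than $Q$, contradicting the standing assumption that $Q$ (equivalently the path $A-B-\cdots-B-C$) is smallest among all witnesses. Set up notation as follows. Let $M$ be the set of elements of height $h$ in $Q$. By \cref{lem max elt structure} and \cref{lem only bits} (recall $m>3$), $M$ contains at least the $2m-4$ maximal elements of $Y$ coming from $A\setminus X$ and $C\setminus X$. Let $L$ be the set of elements covered by some element of $M$. By hypothesis every element of $M$ covers exactly $L$, so $M$ sits as a ``Newtonian layer'' over $L$. The downset generated by $L$ lies in $X$ together with possibly the unique minimal elements of $A\setminus X$ and $C\setminus X$. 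The key counting observation is that $Q$ contains the removed maximal elements of $A\setminus X$ and $C\setminus X$, $M$, and the downset $\downarrow L$, and all of these are essentially disjoint. So $n+m=|Q|\geq |{\downarrow L}| + |M| + (\text{a few})$, while $|M|\geq 2m-4$.

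Next, compare $A$ and $C$ structurally. Each anchor copy consists of $X$ together with $m$ elements that are, up to at most one bottom and one top element, an antichain sitting at height $h$ over $L$. Since all elements of $M$ have identical down-sets, they are interchangeable. Consequently, the isomorphism type of a downset of the form $X\cup(\text{bottoms})\cup(\text{some subset of } M)\cup(\text{tops})$ depends only on how many elements of $M$ it uses and on which top and bottom elements are present. So $A$ and $C$ differ only in their top and bottom pieces, not in the middle antichain. I then want to construct a new poset $Q'$ from $X$, the needed bottom elements, only as many copies of the height-$h$ layer as are actually needed (namely $|A\cap M|$ of them, shared between the copies of $A$ and $C$), and the top elements of $A$ and of $C$ placed over that shared layer. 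The aim is to check that $Q'$ has $A$, $B$, $C$ as its size-$n$ downsets. For this I would use \cref{lem all B} together with the interchangeability of $M$: every size-$n$ downset of $Q'$ is isomorphic to one of a downset of $Q$ of the corresponding type, so $Q'$ produces no spurious downsets. The exchange path in $Q'$ then has length smaller than $m-1$ unless $|A\cap M|$ is already forced to be small relative to $X$.

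Finally, extract the inequality. Minimality of $Q$ among all witnesses forces the compressed $Q'$ to be no smaller, which should translate into the condition that $X$ already contains at least about $m$ elements, namely $L$ together with its downset (forced to be large because $L$ must support the layer). Then $n=|X|+m\geq 2m-1$, giving $m\leq\frac12(n+1)$. I expect the main obstacle to be this middle step: verifying that the compressed poset $Q'$ is genuinely a witness, in particular that the top elements of $A$ and $C$ placed over a shared layer do not create a downset isomorphic to none of $A$, $B$, $C$. If the construction is too delicate, I would instead try to directly show $|X|\geq m-1$ by a height-multiset argument of the style used in \cref{lem stick condition}: swap elements of $L$ against elements of $M$ in size-$n$downsets and use that \cref{lem all B} forces all such downsets to be copies of $B$.
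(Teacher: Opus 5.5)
There is a genuine gap: you have not identified the mechanism that produces $|X|\ge m-1$, and the compression you propose cannot supply it.

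\textbf{Problems with the compression.} You share (almost) the whole height-$h$ layer between the copies of $A$ and $C$. Your claim that every size-$n$ downset of the resulting $Q'$ is isomorphic to a downset of $Q$ is unjustified, and it fails in two ways.
\begin{enumerate}
\item A layer of only $|A\cap M|$ elements can lose $B$ altogether. If $A\setminus X$ has a top element, copies of $B$ such as $X$ together with $m$ height-$h$ elements of $(A\setminus X)\cup(C\setminus X)$ need one more layer element than $A$ does.
\item Suppose instead you keep $m$ layer elements and put both top elements over overlapping $(m-1)$-subsets. Then a downset containing both tops need not be isomorphic to anything in $Q$, where the two tops sit over disjoint parts of $(A\setminus X)\cup(C\setminus X)$. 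Such a spurious downset omits only two elements of $X$, so its existence gives nothing like $|X|\ge m-1$.
\end{enumerate}
There is also an internal tension. If $Q'$ really had no spurious downsets, you would get an outright contradiction. The inequality must therefore come from analysing exactly which spurious downsets can occur, and your last paragraph does not do this.

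\textbf{Problems with the counting and the fallback.} The claim that ${\downarrow L}$ is ``forced to be large because $L$ must support the layer'' has no basis. $L$ can be a single element, and your count $n+m\ge|{\downarrow L}|+|M|+\cdots$ only bounds $|{\downarrow L}|$ from above. The height-multiset fallback cannot work either. Under this lemma's hypothesis all height-$h$ elements have the same lower covers, so exchanging them never changes the isomorphism type. This is precisely the case $j=i$, in which the cover-count argument in the proof of \cref{thm k=3} yields only one value; that is why this lemma is needed at all.

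\textbf{The missing idea: compress by exactly one element.}
\begin{itemize}
\item With $Y=Q\setminus\{a,c\}$, delete one height-$h$ element $c_j\in C\setminus X$ and reuse a height-$h$ element $a_j\in A\setminus X$ in its place.
\item Add one new element $c'$ so that $C'=((Y\cap C)\setminus\{c_j\})\cup\{c',a_j\}\cong C$.
\item Then $Q'=A\cup C'$ satisfies $|Q'|=|Q|-1$.
\end{itemize}
A size-$n$ downset of $Q'$ avoiding $c'$ is a downset of $Q$. One containing $c'$ but not $a$ is isomorphic to a downset of $Q$, via the bijection sending $c_j\mapsto a_j$ and $c\mapsto c'$. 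So a spurious $D$ must contain both $a$ and $c'$, and hence, by indistinguishability of the height-$h$ elements, all of $A\setminus X$ and $C'\setminus X$.

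Since $|Q'\setminus D|=m-1$, all $m-1$ omitted elements lie in $X$. This gives $|X|\ge m-1$, i.e.\ $m\le\frac12(n+1)$. If no spurious $D$ exists, then $Q'$ is a witness smaller than $Q$, contradicting the choice of $Q$ as a smallest witness.
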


\begin{proof}
   Recall that $|X|=n-m$. Hence $m\leq \frac{1}{2}(n+1)\iff  |X|\geq m-1$. We will show that either $|X|\geq m-1$ or there exists a smaller witness than $Q$, which is a contradiction.  

    Let $Y$ be the poset described in \cref{lem max elt structure}, that is the poset resulting from removing a maximal element $a$ of $A\setminus X$ and a maximal element $c$ of $C\setminus X$ from $Q$, \textit{i.e.}, $Y=Q\setminus \{a,c\}$. Under the given hypothesis, the maximal elements of $Y$ are indistinguishable since they all cover the same elements. 
    
    We now construct a new poset $Q'$ from $Y$ in the following way. Suppose $j$ of the maximal elements were from $C\setminus X$, call them $c_1, c_2, \ldots, c_j$.  Remove $c_j$.  Take one of the maximal elements of $Y$ that was from $A\setminus X$, call it $a_j$.   Since all of the maximal elements of $Y$ are indistinguishable, the set $\{c_1, \ldots, c_{j}\}$ is indistinguishable from $\{c_1, \ldots, c_{j-1}, a_j\}$.  Now add a new element $c'$ to form a new downset isomorphic to $C$, call this downset $C'$, where this new element $c'$ is above $\{c_1, \ldots, c_{j-1}, a_j\}$ if $C\setminus X$ had a maximal element and is another indistinguishable maximal element like those in $Y$ otherwise, guaranteeing that $((Y\cap C)\setminus\{c_j\})\cup \{c', a_j\} = C'$ is a copy of $C$.  Let $Q' = A\cup C'$. 
   
    Note that $|Q'| = |Y|+2-1 = |Q|-1$ and that $Q'$ has at least one copy of $A$, $B$ and $C$ as downsets and satisfies the covering condition of the statement.  If $Q'$ has no downset of size $n$ not isomorphic to $A$, $B$, or $C$, then we have a smaller witness satisfying the hypotheses of the statement, contradicting that $Q$ is witness of smallest size.  
    
    Now assume that $Q'$ has a downset $D$ of size $n$ which is not isomorphic to $A$, $B$, or $C$. 
    
    Any downset of $Q'$ of size $n$ not containing $c'$ is a downset of $Q$ as well and is therefore isomorphic to $A$, $B$, or $C$. By the indistinguishability of the maximal elements of $Y$, there exists a bijection $f:Y\setminus \{a_j\}\rightarrow Y\setminus \{c_j\}$ that takes $c_j$ to $a_j$ and fixes everything else.  Consequently, there exists a bijection $g:Q\setminus \{a, a_j\}\rightarrow Q'\setminus \{a\}$ with $g(x)=f(x)$ for $x\not=c$ and $g(c)=c'$. Therefore, any downset of $Q'$ of size $n$ containing $c'$ but not $a$ is also isomorphic to a downset of size $n$ of $Q$ and hence is isomorphic to $A$, $B$, or $C$. Therefore $D$ must include both $a$ and $c'$. 

    Consider the possible structures of $A\setminus X$ and $C\setminus X$ as described in \cref{lem only bits}.  If $A\setminus X$ contains a unique maximal element then this maximal element is $a$ and $D$ must contain all of $A\setminus X$.  If $A\setminus X$ does not contain a unique maximal element then all its maximal elements are at height $h$ and all are indistinguishable by the hypotheses of the statement.  If any one of them is not included in $D$ then by indistinguishability we have the same poset as if $a$ was not in $D$ but this other element was, but that poset, as noted above, must be a copy of $A$, $B$, or $C$ which is a contradiction.  Therefore in this case as well $D$ must contain all of $A\setminus X$.  Using the indistinguishability of the maximal elements we likewise obtain that $D$ must contain all of $C'\setminus X$.  

    However, we need to remove $m-1$ elements from $Q'$ to obtain $D$ so there must be at least $m-1$ elements of $Q'$ which are not in either $A\setminus X$ or in $C'\setminus X$, hence there are at least $m-1$ elements in $X$ which is what we wanted to prove.

\end{proof}

\begin{proof}[Proof of theorem \ref{thm k=3}]
    Continue with notation and set up as earlier in this section.  
    
    We will show that $X$ has size at least $m-3$.  This will give us what we want because $|X| \geq m-3$ gives $3 \geq m -|X|$ and so $|A\cup C| = 2m+|X| = \frac{3}{2}m + \frac{1}{2}m + \frac{3}{2}|X| - \frac{1}{2}|X| = \frac{3}{2}m +  \frac{3}{2}|X| + \frac{1}{2}(m-|X|) \leq \frac{3}{2}m + \frac{3}{2}|X| + \frac{3}{2} = \frac{3}{2}(m+|X|+1) = \frac{3}{2}(n+1)$ which gives the desired result.

    As in \cref{lem max elt structure} consider $A\cup C$ with a maximal element of $A\setminus X$ and a maximal element of $C\setminus X$ removed.  Call this poset $Y$.  By \cref{lem max elt structure} we know that $Y$ has all maximal elements of the same height, $h$, and all elements covered by at least one of these maximal elements is covered by at least $m-2$ of them.  Let $i$ be the number of maximal elements of $Y$ and note that $i\geq 2m-4$.

    We know that all ways of removing $m-2$ of the maximal elements of $Y$ give a copy of $B$, or to say it another way, choosing $i-m+2$ maximal elements of $Y$ along with all the non-maximal elements of $Y$ gives a copy of $B$. 

    For each element $b$ of $B$ of height less than $h$, we can ask how many of the height $h$ elements of $B$ cover this $b$.  Let $M$ be the multiset of these counts.  Note that $M$ is an isomorphism invariant of $B$.  

    Let $y$ be an element of $Y$ that is covered by at least one, hence at least $m-2$ of the maximal elements of $Y$.  Let $j$ be the number of maximal elements of $Y$ covering $y$.  By choosing as few as possible or as many as possible of the elements covering $y$ when making a copy of $B$ as described above we get at most $\min\{j,i-m+2\}$ elements covering $y$ and at least $j-m+2$ (which is nonnegative by \cref{lem max elt structure}) elements covering $y$ in one of these copies of $B$.  Furthermore, all intermediate values are possible by making suitable choices of maximal elements.

    Therefore, the number of different values that the number of covers of $y$ in one of these copies of $B$ can take is
    \begin{align*}
        & \min\{j, i-m+2\} - (j-m+2) + 1 \\
        & = \begin{cases}
            j-(j-m+2)+1 & \text{if }m-2 \leq j \leq i-m+2 \\
            i-m+2-(j-m+2)+1 & \text{if }j > i-m+2
        \end{cases} \\
        & = \begin{cases}
            m-1 & \text{if }m-2 \leq j \leq i-m+2 \\
            i-j+1 & \text{if }j > i-m+2.
        \end{cases} 
    \end{align*}
    If $j \leq i-m+2$ then since all these different values appear in $M$, then $M$ has at least $m-1$ distinct elements and hence $Y$ also has at least $m-1$ elements of height less than $h$.  By \cref{lem only bits}, $A\setminus X$ and $C\setminus X$ can each have at most one element that is not maximal, \textit{i.e.} at most one element not of height $h$. Therefore at most two of the elements of $Y$ with height less than $h$ are contained in $(A\setminus X)\cup (C\setminus X)$, so $X$ contains at least $m-3$ elements and so we are done if any choice of $y$ has $j \leq i-m+2$. 

    Now suppose $j>i-m+2$ and suppose that $y$ was chosen so that $j$ was minimized and that no different choice of maximal element of $A\setminus X$ and $C\setminus X$ to remove in the creation of $Y$ would permit a $y$ with a smaller $j$.  If $j=i$ then by the suppositions of the previous sentence the hypotheses of \cref{lem all covers all} hold and so we have $|X|\geq m-1$ which gives the theorem.
    
   Finally, suppose that $j<i$. Pick a specific copy of $B$ so that the number of covers of $y$ in this $B$ is at the minimum value of $j-m+2$ (\textit{i.e.} choose a copy of $B$ made by removing $m-2$ elements from above $y$). Note that this implies that all the maximal elements of $Y$ not in this copy of $B$ cover $y$.  Additionally, since $j<i$, there is at least one height $h$ element of this copy of $B$ which does not cover $y$.

    Pick two maximal elements $x$ and $z$ of $Y$ with $x$ not in this copy of $B$ (so $x$ covers $y$ in $Y$)  and $z$ in this copy of $B$ but not a cover of $y$.  Adding $x$ and removing $z$ from $B$, we get another copy of $B$, call it $B'$.  $y$ is covered by more than $j-m+2$ elements of height $h$ in $B'$, but $M$ is an isomorphism invariant so some other element $y'$ must be covered by exactly $j-m+2$ elements of height $h$ in $B'$.  As before, every element of height $h$ in $Y$ but not in $B'$ must cover $y'$.  Continuing, pick $x'$ and $z'$ maximal elements of $Y$, with $x'$ not in $B\cup B'$ and $z'$ in $B'$ with the property that $y'$ is not covered by $z'$.  Note that when we remove $z'$ and add in $x'$, the number of height $h$ elements covering $y$ either stays the same or goes up since all of the maximal elements in $Y$ that were not in $B$, including $x'$, cover $y$. 

    Continuing likewise, at the $t+1$th step we have $B, B', \ldots B^{(t)}$, $y, y', \ldots, y^{(t)}$.  We can pick $x^{(t)}$ and $z^{(t)}$ maximal elements of $Y$ so that $x^{(t)}$ is not in $B\cup B'\cup \cdots B^{(t)}$ and hence covers $y, y', \ldots, y^{(t)}$ (since each is covered by all maximal elements of $Y$ which are not in their respective copy of $B$), and we can pick $z^{(t)}$ in $B^{(t)}$ but not covering $y^{(t)}$.  Then in $B^{(t+1)} = B^{(t)} \cup \{x^{(t)}\} \setminus \{z^{(t)}\}$  all of $y, y', \ldots, y^{(t)}$ are covered by more than $j-m+2$ elements and hence there must be some new $y^{(t+1)}$ which is covered by $j-m+2$ elements in $B^{(t)}$.

    This process ends when no maximal elements of $Y$ not in $B\cup B'\cup \cdots$ remain to be chosen, that is, it ends after choosing $x^{(m-3)}$ from which we build $B^{(m-2)}$ and obtain $y^{(m-2)}$.  All the $y^{(t)}$ generated in this way are distinct and so $Y$ has at least $m-1$ non-maximal elements.  At most two of these are in $(A\setminus X)\cup(C\setminus X)$ and so $|X| \geq m-3$ in this case as well which completes the proof.
\end{proof}

\begin{remark}
Our first observation about a shortest $A-B-\cdots-B-C$ path was that the associated $Q$ must have size at most $2n$ (\cref{lem A cup B general}) and we used this in 
\cref{lemma: improved bound general} to give a slightly improved bound for all $k\geq 3$.  One would hope to be able to similarly use \cref{thm k=3} to show that some minimal witness must be even smaller by capturing three of the $P_i$ in at most $\frac{3}{2}(n+1)$ elements and capturing the rest naively with at most $n(k-3)$ additional elements for an improved bound. Unfortunately, this would require further arguments since some of the lemmas use the fact that downsets other than $A$, $B$, $C$ cannot appear in the $k=3$ case,  though we are hopeful that further arguments could be made.  One could hope for even more, where the additional elements do not need to be treated naively and can be collected into opportunities to use \cref{thm k=3} with an average on the order of $\frac{3}{4}n$ per $P_i$, but making such an argument work seems to require quite a bit more, likely some structural results on exchange graphs.
\end{remark}

\section{Conclusion}

Given $\Gamma_n=\{P_1, \ldots, P_k\}$ with distinct $P_i$ of size $n$, if this set has a witness, that is a $Q$ so that the set of downsets of size $n$ of $Q$ is exactly $\Gamma_n$, then we have shown that if $k=3$ then it has a witness of size at most $\frac{3}{2}(n+1)$ (\cref{thm k=3}).  For $k\geq 3$, we have shown that all the minimal witnesses of $\Gamma_n$ are at most of size $n(k-1)$ (\cref{lemma: improved bound general}). We have also given families of examples showing that there exists no upper bound of the form $n+k+c$, for any constant $c$, for the size of the smallest witness of $\Gamma_n$ (\cref{thm section 3}). This remains true even if we restrict the $P_i$ to have height 2 or order dimension 2 (\cref{thm height 2}).  In fact, this last example shows that even $n+2k+c$ is impossible.

We have done an exhaustive exploration of the $k=3$ case up to witness size 9 and up to witness size 10 for height at most three and have found to that point that there is always a witness of size at most $n+3$ (so $m\leq 3$ in the language of \cref{subsec k=3}).  Furthermore, we suspect that in the setup of \cref{subsec k=3} there are sufficient constraints to force either sufficient symmetry that a smaller witness is possible or $m=3$, in a manner somewhat like \cref{lem all covers all} but stronger, though we were not able to carry the proof through.  

More specifically, we suspect that an argument of the basic form as in \cref{lem all covers all} where one can show that unless $m$ is already very small then one can shrink the length of the central chain of $B$s in the path, at the cost of moving to a new $Q'$ that might not be a downset of $Q$, should be possible with weaker hypotheses than those we needed in \cref{lem all covers all}.  In that lemma we used the very strong hypothesis that all the elements of height $h$ had the same past, but all we needed were some isomorphims of certain subposets so as to guarantee that we hadn't built any problematic new downsets of size $n$.  Some suitable weaker hypotheses should still allow this.  Likewise, the counting arguments in the proof of \cref{thm k=3} ought to be able to be improved by using some design theory arguments.  While we were not able to pull any of these improvements through, we think they are likely possible in a way that would result in a bound on $m$ that was still dependent on $n$ or $|X|$, but improved on \cref{thm k=3}.    Obtaining a constant bound on $m$, potentially $m\leq 3$  likely requires some truly new ideas.

Chaining such an $m\leq 3$ result for $k=3$ 
by adding more $P_i$ using the structure of the exchange graph, one could hope for a general bound of $n+3k-3$ which is consistent with all the evidence we are aware of.

A substantial distance remains between $n+3k-3$ and what little we know, and we hope that this question will prove of interest to others and be resolved.

Another direction that one might take in extending these results is searching for ways to ``bootstrap'' covtree, that is searching for rules that tell you directly whether or not a candidate node is a true node without having to search for a witness. For instance, the connectivity of the exchange graph (\cref{thm diameter}) implies that if $\Gamma_n$ is a node in covtree then one can construct a connected graph whose vertices are the elements of $\Gamma_n$ (meaning unlabelled posets, and allowing for the same poset to appear as a vertex more than once) and two vertices are adjacent if and only if one can be obtained from the other by replacing a single maximal element. This condition allows us to rule out candidate nodes without searching for a witness (though it cannot be used to verify that a candidate node is a true node). In a similar vain, constraints such as we obtained on the structure of witnesses (\cref{lem all B}, \cref{lem only bits} and \cref{lem max elt structure}) might in future be used to constrain the order structure of the elements of $\Gamma_n$ directly.
\bibliographystyle{unsrt}
\bibliography{main,causet_refernces}

\end{document}